\newtheorem{thm}{Theorem}[section]
\newtheorem{defi}[thm]{Definition}
\newtheorem{lem}[thm]{Lemma}
\newtheorem{prop}[thm]{Proposition}
\newtheorem{cor}[thm]{Corollary}
\newtheorem{rem}[thm]{Remark}
\title{\textbf{\Large Special Legendrian submanifolds in \\ 
toric Sasaki-Einstein manifolds}}
\author{\textsc{Takayuki Moriyama}}
\date{}
\begin{document}
\maketitle
\thispagestyle{empty}

\begin{quote}
\textbf{\fontsize{10pt}{15pt}\selectfont Abstract.}
\textrm{\fontsize{10pt}{15pt}\selectfont 
We show that every toric Sasaki-Einstein manifold $S$ admits 
a special Legendrian submanifold $L$ which arises as 
the link ${\rm fix}(\tau)\cap S$ of the fixed point set ${\rm fix}(\tau)$ 
of an anti-holomorphic involution $\tau$ on the cone $C(S)$. 
In particular, we obtain a special Legendrian torus $S^{1}\times S^{1}$ in 
an irregular toric Sasaki-Einstein manifold which is diffeomorphic to $S^{2}\times S^{3}$. 
Moreover, there exists a special Legendrian submanifold in 
$\sharp m(S^{2}\times S^{3})$ for each $m\ge 1$. 
}
\end{quote}

\section{Introduction}
A Sasaki-Einstein manifold is a $(2n+1)$-dimensional Riemannian manifold $(S,g)$ 
whose metric cone $(C(S),\overline{g})=(\mathbb{R}_{>0}\times S, dr^{2}+r^{2}g)$ 
is a Ricci-flat K\"{a}hler manifold where $r$ is the coordinate of $\mathbb{R}_{>0}$. 
We assume that $S$ is simply connected. 
Then the cone $C(S)$ is a complex $(n+1)$-dimensional Calabi-Yau manifold 
which admits a holomorphic $(n+1)$-form $\Omega$ and 
a K\"{a}hler form $\omega$ on $C(S)$ satisfying the Monge-Amp\`{e}re equation 
\begin{equation*}
\Omega\wedge \overline{\Omega}=c_{n+1}\omega^{n+1}
\end{equation*}
for a constant $c_{n+1}$. 
Then the real part $\Omega^{\rm Re}$ of $\Omega$ is a calibration 
whose calibrated submanifolds are called 
{\it special Lagrangian submanifolds}~\cite{HL}. 
An $n$-dimensional submanifold $L$ in a Sasaki-Einstein manifold $(S,g)$ is 
a {\it special Legendrian submanifold} if 
the cone $C(L)$ is a special Lagrangian submanifold in $C(S)$. 
We identify $S$ with the hypersurface $\{r=1\}$ in $C(S)$. 
Then $L$ is regarded as the link $C(L)\cap S$ of $C(L)$. 

Recently, toric Sasaki-Einstein manifolds have been constructed~\cite{BGN, FOW, GMSW, K}. 
The purpose of this paper is to construct a special Legendrian submanifold 
in every toric Sasaki-Einstein manifold. 
For a toric Sasaki manifold $(S,g)$, the metric cone $(C(S),\overline{g})$ is a toric K\"{a}hler variety. 
Then there exists an anti-holomorphic involution $\tau$ on $C(S)$. 
\begin{thm}\label{s1t1}
Let $(S,g)$ be a compact simply connected toric Sasaki-Einstein manifold. 
Then the link ${\rm fix}(\tau)\cap S$ is a special Legendrian submanifold. 
\end{thm}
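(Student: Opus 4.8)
The plan is to realise $F:=\mathrm{fix}(\tau)$ as a special Lagrangian cone in $C(S)$ and to obtain $L$ as its link. I will verify in turn that: (i) $\tau$ is an anti-holomorphic \emph{isometric} involution, so $\tau^{*}\omega=-\omega$; (ii) $F$ is Lagrangian; (iii) after a constant-phase normalisation $\tau^{*}\Omega=\overline{\Omega}$, so that $F$ is special Lagrangian; and (iv) $\tau$ commutes with the cone dilations, so that $F=C(L)$ with $L=F\cap S$, which is then special Legendrian by definition.

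First I would record the pointwise linear algebra. Writing $\omega(\cdot,\cdot)=\overline{g}(J\cdot,\cdot)$ and using $J\tau_{*}=-\tau_{*}J$ together with $\tau^{*}\overline{g}=\overline{g}$, a one-line computation gives
\begin{equation*}
(\tau^{*}\omega)(X,Y)=\overline{g}(J\tau_{*}X,\tau_{*}Y)=-\overline{g}(\tau_{*}JX,\tau_{*}Y)=-\overline{g}(JX,Y)=-\omega(X,Y),
\end{equation*}
hence $\tau^{*}\omega=-\omega$. (That $\tau$ is an isometry is automatic: on the toric Calabi-Yau cone $\tau$ is the standard complex conjugation, which preserves the canonical Ricci-flat K\"{a}hler metric; equivalently $\tau^{*}\overline{g}$ is again such a metric and uniqueness applies.) At a point $p\in F$ the differential $d\tau_{p}$ is an involution anticommuting with $J$, so $J$ interchanges its $(\pm1)$-eigenspaces; thus $T_{p}F$, the $(+1)$-eigenspace, has real dimension $n+1$, and $\omega|_{T_{p}F}=0$ follows from $\tau^{*}\omega=-\omega$. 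Hence $F$ is Lagrangian, provided it is nonempty — which holds because the real locus of a toric variety is nonempty.

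Next I would pin down the action of $\tau$ on $\Omega$. Since $\tau$ is anti-holomorphic, $\Theta:=\overline{\tau^{*}\Omega}$ is again a holomorphic $(n+1,0)$-form; using $\tau^{*}\omega=-\omega$ one checks that $\Theta$ satisfies the same Monge-Amp\`{e}re normalisation $\Theta\wedge\overline{\Theta}=c_{n+1}\omega^{n+1}$ as $\Omega$. Therefore $\Theta/\Omega$ is a holomorphic function of constant modulus on the connected manifold $C(S)$, hence a unimodular constant $c$, and $\tau^{*}\Omega=\overline{\Theta}=\overline{c}\,\overline{\Omega}$. Replacing $\Omega$ by a suitable constant-phase multiple $e^{i\alpha}\Omega$ — legitimate because simple connectivity makes $\Omega$ a globally single-valued section of the canonical bundle — I may assume $\tau^{*}\Omega=\overline{\Omega}$. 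Then for real vectors $X_{1},\dots,X_{n+1}\in T_{p}F$ we have $\tau_{*}X_{i}=X_{i}$, so
\begin{equation*}
\Omega(X_{1},\dots,X_{n+1})=(\tau^{*}\Omega)(X_{1},\dots,X_{n+1})=\overline{\Omega}(X_{1},\dots,X_{n+1})=\overline{\Omega(X_{1},\dots,X_{n+1})},
\end{equation*}
so $\Omega|_{F}$ is real. On the Lagrangian $F$ one has $\Omega|_{F}=e^{i\theta}\mathrm{vol}_{F}$ for a phase function $\theta$, whence $\mathrm{Im}\,\Omega|_{F}=0$ forces $\theta\in\{0,\pi\}$; by connectedness $\theta$ is constant, and after fixing the orientation $\Omega^{\mathrm{Re}}|_{F}=\mathrm{vol}_{F}$, so $F$ is calibrated by $\Omega^{\mathrm{Re}}$, i.e.\ special Lagrangian.

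Finally I would check that $F$ is a cone. The Euler field $\Psi=r\partial_{r}$ is the unique vector field with $\mathcal{L}_{\Psi}\overline{g}=2\overline{g}$, so the isometry $\tau$ satisfies $\tau_{*}\Psi=\Psi$; equivalently, in the toric picture the radial function $r$ is a function of the moment map $\mu$, which $\tau$ preserves, so $\tau^{*}r=r$. Thus $\tau$ commutes with the dilations $\rho_{\lambda}\colon(r,s)\mapsto(\lambda r,s)$, the set $F$ is $\rho_{\lambda}$-invariant, and therefore $F=C(L)$ for $L=F\cap S=\mathrm{fix}(\tau)\cap S$. Since $C(L)=F$ is special Lagrangian, $L$ is special Legendrian by definition, which is the assertion of Theorem~\ref{s1t1}. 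I expect the main obstacle to be step (iii): the global normalisation $\tau^{*}\Omega=\overline{\Omega}$ uses both the explicit Calabi-Yau data on the toric cone and the single-valuedness of $\Omega$ guaranteed by simple connectivity, and it is exactly this input that upgrades the Lagrangian $F$ to a \emph{special} Lagrangian.
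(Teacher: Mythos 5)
Your overall strategy is exactly the paper's (its Proposition 3.1.3 applied in Theorem 3.3.1): realize ${\rm fix}(\tau)$ as a $\theta$-special Lagrangian cone and take its link. However, two of your intermediate justifications are flawed as written. First, the claim that $\tau$ is ``automatically'' an isometry because ``$\tau^{*}\overline{g}$ is again a Ricci-flat K\"ahler cone metric and uniqueness applies'': the Ricci-flat cone metric on $C(S)$ is \emph{not} the flat quotient metric inherited from $\mathbb{C}^{d}$ --- it arises from solving a Monge--Amp\`ere equation after a transverse K\"ahler deformation --- so conjugation-invariance is precisely what must be proved; moreover the uniqueness theorem for toric Sasaki--Einstein metrics (Cho--Futaki--Ono, cited in the paper) holds only modulo the identity component of the automorphism group, so at best it yields $\tau^{*}\overline{g}=\psi^{*}\overline{g}$ for some automorphism $\psi$, not $\tau^{*}\overline{g}=\overline{g}$. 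Second, the Euler field is \emph{not} the unique vector field with $\mathcal{L}_{\Psi}\overline{g}=2\overline{g}$: adding any Killing field (e.g.\ $\xi$, or any generator of the torus action) produces another such field, so $\tau_{*}\Psi=\Psi$ does not follow from that characterization. Relatedly, $\tau^{*}r=r$ alone does not immediately give commutation with dilations (a map can preserve the level sets of $r$ while shearing along them), so this step also needs the isometry property.

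Both gaps are repaired by promoting your parenthetical moment-map remark to the \emph{start} of the argument, which is what the paper does. Since $\tilde{\mu}\circ\tau=\tilde{\mu}$ and $\tfrac{1}{2}r^{2}=F_{\xi}\exp 2\phi$ is a function of $\tilde{\mu}$ (this is where the toric structure and the choice of Sasaki--Einstein potential enter), one gets $\tau^{*}r=r$; then $\eta=d^{c}\log r$ and anti-holomorphicity give $\tau^{*}\eta=-\eta$, hence $\tau^{*}\omega=\tau^{*}\tfrac{1}{2}d(r^{2}\eta)=-\omega$, hence $\tau^{*}\overline{g}=\overline{g}$ by $J$-compatibility; finally $\tau_{*}(r\tfrac{\partial}{\partial r})=r\tfrac{\partial}{\partial r}$ because $r\tfrac{\partial}{\partial r}$ is the $\overline{g}$-gradient of $\tfrac{1}{2}r^{2}$ and $\tau$ is an isometry fixing $r$, which yields the dilation-equivariance you need for ${\rm fix}(\tau)$ to be a cone. (Alternatively, the paper's Proposition 3.1.2 shows that any closed $(n+1)$-dimensional submanifold on which $\eta$ and $\Omega^{\rm Im}$ vanish is automatically a special Lagrangian cone, so equivariance is not even needed.) Your phase normalization is the one place where you genuinely depart from the paper, and it is correct: $\Theta:=\overline{\tau^{*}\Omega}$ satisfies the same Monge--Amp\`ere normalization, so $\Theta/\Omega$ is a holomorphic function of constant modulus, hence constant. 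The paper instead shows that $f=\overline{\tau^{*}\Omega}/\Omega$ satisfies $L_{r\frac{\partial}{\partial r}}f=L_{\xi}f=0$ and invokes constancy of basic transversely holomorphic functions; your maximum-modulus argument is a slightly more elementary route to the same unimodular constant.
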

The fixed point set of an isometric and anti-holomorphic involution is called the {\it real form}. 
It is well known that a real form of a Calabi-Yau manifold is a special Lagrangian submanifold. 
The point of Theorem~\ref{s1t1} is to show that the real form ${\rm fix}(\tau)$ 
arises as the cone of the link ${\rm fix}(\tau)\cap S$. 

A typical example of Sasaki-Einstein manifolds is 
the odd-dimensional unit sphere $S^{2n+1}$ with the standard metric, 
then the cone is the complex space $\mathbb{C}^{n+1}\backslash \{0\}$. 
Special Lagrangian cones in $\mathbb{C}^{n+1}\backslash \{0\}$ are regarded 
as special Lagrangian subvarieties in $\mathbb{C}^{n+1}$ 
with an isolated singularity at the origin. 
Joyce had provided the theory of special Lagrangian submanifolds 
in $\mathbb{C}^{n+1}$ with conical singularities \cite{J2}. 
Many examples of special Lagrangian submanifolds in $\mathbb{C}^{n+1}$ 
with the isolated singularity at the origin 
had been constructed~\cite{CM, H, J3, Mc}. 
These special Lagrangian cones induce 
special Legendrian submanifolds in the sphere $S^{2n+1}$. 
Recently, Haskins and Kapouleas gave a construction of 
special Legendrian immersions into the sphere $S^{2n+1}$ \cite{HK}. 
Special Legendrian submanifolds have also the aspect of minimal Legendrian submanifolds. 
On the sphere $S^{2n+1}$, the standard Sasaki-Einstein structure is regular 
and induced from the Hopf fibration $S^{2n+1}\to \mathbb{C}P^{n}$. 
Some special Legendrian submanifolds in $S^{2n+1}$ arise as 
lifts of minimal Lagrangian submanifolds in $\mathbb{C}P^{n}$~\cite{CM, Mc}. 
We have a generalization of Theorem~\ref{s1t1} as follows : 
\begin{thm}\label{s1t1.5}
Let $(S,g)$ be a compact toric Sasaki manifold. 
Then the link ${\rm fix}(\tau)\cap S$ is a totally geodesic Legendrian submanifold. 
\end{thm}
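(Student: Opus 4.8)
The plan is to extract from the proof of Theorem~\ref{s1t1} exactly the part that does not use the Einstein condition. Write $F:={\rm fix}(\tau)\subset C(S)$ and $L:=F\cap S$. I will establish three properties of $F$: that $F$ is a metric cone, so that $F=C(L)$; that $F$ is Lagrangian in $(C(S),\omega)$; and that $F$ is totally geodesic in $(C(S),\bar g)$. The second property is equivalent to $L$ being Legendrian, and the third will force $L$ to be totally geodesic in $S$. Only the cone property genuinely uses the toric hypothesis: the other two are the standard facts that a real form is Lagrangian and that the fixed set of an isometry is totally geodesic, and neither needs the holomorphic volume form or the Monge-Amp\`{e}re equation underlying the word \emph{special} in Theorem~\ref{s1t1}.

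First I would use that the construction presents $\tau$ as complex conjugation relative to the torus action, so that $\tau$ is an isometric anti-holomorphic involution fixing the real positive torus $(\mathbb{R}_{>0})^{n+1}$ pointwise. The Euler field $r\partial_r$ lies in the Lie algebra of this real torus, while the Reeb field $\xi=J(r\partial_r)$ lies in the compact torus direction, so $d\tau(r\partial_r)=r\partial_r$ and $d\tau(\xi)=-\xi$. In particular $r\circ\tau=r$ and $\tau$ commutes with dilations, so $F$ is invariant under the $\mathbb{R}_{>0}$-scaling and therefore $F=\mathbb{R}_{>0}\times L=C(L)$ with $L=F\cap S$.

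Next, at any $p\in F$ the differential $d\tau_p$ is an orthogonal involution satisfying $d\tau_p\circ J=-J\circ d\tau_p$; its $(+1)$-eigenspace is $T_pF$, and $J$ carries this eigenspace isomorphically onto the orthogonal $(-1)$-eigenspace, so $\dim_{\mathbb R}T_pF=n+1$ and $\omega(v,w)=\bar g(Jv,w)=0$ for all $v,w\in T_pF$. Hence $F=C(L)$ is Lagrangian; writing $\omega=\tfrac12 d(r^2\eta)=r\,dr\wedge\eta+\tfrac{r^2}{2}d\eta$ and restricting to $\{r=1\}$, the vanishing of $\omega$ on $T\,C(L)=\mathbb{R}\partial_r\oplus TL$ is equivalent to $\eta|_L=0$ together with $d\eta(X,Y)=0$ for $X,Y\in TL$, i.e.\ to $L$ being Legendrian. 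For the metric statement, any $\bar g$-geodesic with initial data in $F$ is preserved by $\tau$ by uniqueness of geodesics, so $F$ is totally geodesic in $C(S)$. Using the cone connection identities $\bar\nabla_XY=\nabla_XY-r\,g(X,Y)\partial_r$, $\bar\nabla_{\partial_r}X=\tfrac1r X$ and $\bar\nabla_{\partial_r}\partial_r=0$ for $X,Y$ tangent to $S$, and the fact that $\partial_r$ is tangent to $C(L)$, the second fundamental form of $C(L)$ in $C(S)$ vanishes in the radial directions and restricts on $TL\otimes TL$ to the second fundamental form of $L$ in $S$. Thus $C(L)$ is totally geodesic if and only if $L$ is, and we conclude that $L$ is totally geodesic.

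The conceptual steps are light once $\tau$ is understood torically, and the step I expect to demand the most care is the cone property $d\tau(r\partial_r)=r\partial_r$: it is precisely here that the toric hypothesis enters, guaranteeing that $F$ is the cone over its link $L$ rather than an arbitrary Lagrangian real form. With that in place, the passage from the cone $F=C(L)$ to the link $L$, for both the Legendrian and the totally geodesic conditions, is a routine warped-product computation.
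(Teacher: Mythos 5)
Your overall architecture matches the paper's: exhibit ${\rm fix}(\tau)$ as a Lagrangian cone, get the Legendrian property for the link, and get the totally geodesic property from the fixed-point set of an isometric involution. (The paper restricts $\tau$ to $S=\{r=1\}$ and applies the standard fact to $\tau|_{S}$ there, while you stay on the cone and descend via the warped-product connection; both transfers are fine, and your equivariance argument $\tau\circ\lambda_{a}=\lambda_{a}\circ\tau$ for the cone property is even a bit cleaner than the paper's open-closed argument in Proposition~\ref{s3.1p3}.) The genuine gap is at the crux: you never establish $\tau^{*}r=r$, equivalently that $\tau$ is anti-symplectic/isometric, and your stated derivation of it is invalid. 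From $d\tau(r\frac{\partial}{\partial r})=r\frac{\partial}{\partial r}$ (which is correct) you conclude ``in particular $r\circ\tau=r$''; this is a non sequitur. Commuting with dilations only makes $r\circ\tau$ homogeneous of degree one, i.e.\ $r\circ\tau=ru$ with $u$ dilation-invariant, and the involution property merely forces $u\cdot(u\circ\tau)=1$, not $u\equiv 1$. Concretely, $\tau(z_{1},z_{2})=(a\bar{z}_{2},\bar{z}_{1}/a)$ on $\mathbb{C}^{2}\setminus\{0\}$ with $a\neq 1$ is an anti-holomorphic involution with $d\tau(r\frac{\partial}{\partial r})=r\frac{\partial}{\partial r}$ and nonempty fixed cone, yet $\tau^{*}r\neq r$ and $\tau$ is not an isometry. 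Likewise the word ``isometric'' in your first sentence is an assumption, not something ``the construction presents'': in the paper, both the isometry property and $\tau^{*}\omega=-\omega$ are \emph{consequences} of $\tau^{*}r=r$ (via $\eta=d^{c}\log r$), and establishing $\tau^{*}r=r$ is exactly the work done in the proof of Theorem~\ref{s3.3t1}. Since your Lagrangian step (orthogonality of the $\pm 1$ eigenspaces of $d\tau_{p}$) and your totally geodesic step (uniqueness of geodesics under an isometry) both use that $\tau$ is a $\overline{g}$-isometry, the proof as written has a hole precisely where the toric hypothesis must do its work.

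The fix is short and is the paper's argument: for any toric Sasaki structure, the K\"ahler potential is a function of the moment map, $\frac{1}{2}r^{2}=F_{\xi}\exp 2\phi$ with $F_{\xi}=\langle\widetilde{\mu},\xi\rangle$ and $\phi$ pulled back through $\widetilde{\mu}$, and $\widetilde{\mu}\circ\tau=\widetilde{\mu}$ because conjugation on $\mathbb{C}^{d}$ preserves $(|z_{1}|^{2},\dots,|z_{d}|^{2})$; hence $\tau^{*}r=r$. (Equivalently: $r$ is $G$-invariant because the torus preserves the Sasaki structure, so on the open dense orbit $r$ depends only on ${\rm Re}\,w$, which $\tau(w)=\overline{w}$ fixes.) With $\tau^{*}r=r$ in hand, $\tau^{*}\eta=-\eta$, $\tau^{*}\omega=-\omega$ and the isometry property all follow, and then the rest of your argument goes through as written.
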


There exist two interesting points of our theorems. 
One is that we can construct a special Legendrian submanifold 
in every toric Sasaki-Einstein manifold 
which is not necessarily the sphere $S^{2n+1}$. 
The other is that some of these special Legendrian submanifolds are 
totally geodesic Legendrian submanifolds in irregular Sasaki-Einstein manifolds. 
A Sasaki-Einstein manifold of dimension $3$ is 
finitely covered by the standard $3$-sphere $S^{3}$. 
Hence we will consider the case of Sasaki-Einstein manifolds 
whose dimension are greater than or equal to $5$. 
Gauntlett, Martelli, Sparks and Wardram provided 
a family of explicit Sasaki-Einstein metrics $g_{p,q}$ on $S^{2}\times S^{3}$~\cite{GMSW}. 
Let $Y_{p,q}$ denote the Sasaki-Einstein manifold $(S^{2}\times S^{3}, g_{p,q})$. 
\begin{thm}\label{s1t2}
There exists a special Legendrian torus $S^{1}\times S^{1}$ 
in the toric Sasaki-Einstein manifold $Y_{p,q}$. 
\end{thm}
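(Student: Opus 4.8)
The plan is to apply Theorem~\ref{s1t1} to the specific manifold $Y_{p,q}$ and then to determine the diffeomorphism type of the resulting submanifold by hand. Since $Y_{p,q}$ is diffeomorphic to $S^{2}\times S^{3}$ it is compact and simply connected, and the Gauntlett--Martelli--Sparks--Waldram metric $g_{p,q}$ makes it a toric Sasaki--Einstein manifold; hence Theorem~\ref{s1t1} already guarantees that $L:={\rm fix}(\tau)\cap Y_{p,q}$ is a special Legendrian submanifold. The entire content of the statement is therefore to show that this $L$ is diffeomorphic to the torus $S^{1}\times S^{1}$.

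First I would write down the explicit GMSW description: the local coordinates $(\theta,\phi,y,\psi,\alpha)$ of $g_{p,q}$, the metric cone $C(Y_{p,q})$ as a toric K\"{a}hler variety with its $T^{3}$-action, and the anti-holomorphic involution $\tau$, which in action--angle variables reverses the three angle coordinates while fixing the moment-map coordinates. In these coordinates the quotient $Y_{p,q}/T^{3}$ is a quadrilateral $P$, e.g. the rectangle $\{(\theta,y):0\le\theta\le\pi,\ y_{1}\le y\le y_{2}\}$, whose four edges correspond to the four facets of the moment cone of $C(Y_{p,q})$; along each edge exactly one primitive circle of $T^{3}$ degenerates. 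Because $\tau$ acts on each $T^{3}$-fibre by inversion, its fixed set freezes the three angles to values in $\{0,\pi\}$, so $L$ is assembled from $2^{3}=8$ copies of $P$, one for each choice of frozen angles, glued to one another along the edges of $P$ according to which circle degenerates there.

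The decisive step is then the combinatorial analysis of this gluing. I would show that the eight sheets are joined into a single connected closed surface, and compute its Euler characteristic from the cell structure it inherits from $P$: along an edge one circle degenerates, identifying the sheets in pairs, and at a vertex two circles degenerate, collapsing the sheets in groups of four, which gives $F=8$ two-cells, $E=4k$ edges and $V=2k$ vertices, where $k$ is the number of edges of $P$, so that $\chi(L)=8-2k$. For the triangular cross-section of the round sphere $S^{5}$ (where $k=3$) this recovers $L\cong S^{2}$ as a consistency check, while for the quadrilateral $P$ of $Y_{p,q}$ one has $k=4$ and hence $\chi(L)=0$. Finally, since ${\rm fix}(\tau)$ is a special Lagrangian cone it is orientable, so $L$ is an orientable closed surface, and an orientable connected closed surface with $\chi=0$ is the torus $S^{1}\times S^{1}$.

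I expect the main obstacle to be the bookkeeping in this last gluing argument: identifying the correct degenerating circle along each facet from the toric data of $Y_{p,q}$, verifying that the mod-$2$ reductions of the facet normals generate all of $\mathbb{Z}_{2}^{3}$ so that the eight sheets form one connected surface rather than several components, and confirming the edge and vertex counts $E=4k$, $V=2k$. Establishing connectedness and orientability carefully is what pins the diffeomorphism type down to $S^{1}\times S^{1}$ rather than merely to a surface of zero Euler characteristic.
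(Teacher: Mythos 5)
Your proposal is correct, but it establishes the topology of ${\rm fix}(\tau)\cap Y_{p,q}$ by a genuinely different route from the paper. The paper works \emph{upstairs} in the K\"{a}hler-quotient presentation $C(S)=(\mu^{-1}(0)\backslash\{0\})/K$: in \S 3.4 it exhibits ${\rm fix}(\tau)\cap S_{\xi}$ as the base of a $2^{k}$-fold covering by $\mu_{\mathbb{R}}^{-1}(0)\cap H_{\xi}$ with deck group $\{a\in K \mid a^{2}=1\}$, and in \S 3.5 it computes this covering space explicitly for $Y_{p,q}$ --- using the facet normals, the kernel matrix $A={}^{t}(-p-q,\ p,\ -p+q,\ p)$ (so $k=1$) and the Martelli--Sparks--Yau Reeb field $\xi_{min}$ --- as a set in $\mathbb{R}^{4}$ cut out by two quadric equations, visibly an $S^{1}$-bundle over an ellipse, hence $S^{1}\times S^{1}$; it then checks in the three parity cases of $(p,q)$ that the residual $\mathbb{Z}_{2}$ deck action has quotient again $S^{1}\times S^{1}$. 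You work \emph{downstairs} over the moment polygon: the link is assembled from $2^{3}=8$ sheets over the quadrilateral, with $\chi=8-2k=0$, connected and orientable, hence a torus. Your flagged bookkeeping does close: the ``groups of four'' at a vertex follows from the goodness condition (if the two normals $\lambda_{i},\lambda_{j}$ at a vertex were congruent mod $2$, then $\frac{1}{2}(\lambda_{i}+\lambda_{j})$ would violate (\ref{s3.2eq9})); the mod-$2$ reductions of $\lambda_{1},\dots,\lambda_{4}$ span $(\mathbb{Z}_{2})^{3}$ in all three parity cases (e.g. $\lambda_{1}+\lambda_{4}\equiv(0,1,0)$, and $(0,0,1)$ comes from $\lambda_{1}+\lambda_{2}$ or $\lambda_{3}+\lambda_{4}$ according to parity) --- a case division exactly parallel to the paper's analysis of the deck action; and orientability is sound because $\Omega_{\theta}^{\rm Im}$ vanishes on the Lagrangian cone ${\rm fix}(\tau)$, forcing $\Omega_{\theta}^{\rm Re}$ to restrict to a nowhere vanishing top form. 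As for what each approach buys: the paper's computation is completely explicit but tied to the specific data of $Y_{p,q}$, in particular to $\xi_{min}$; your argument never uses the Reeb vector or the explicit equations, only the combinatorics of a good moment cone, so it shows more generally that the real link of a toric Sasaki--Einstein $5$-manifold whose cone has $k$ facets (with spanning mod-$2$ normals) is the closed orientable surface of genus $k-3$, recovering $S^{2}$ for $S^{5}$ and the torus for $Y_{p,q}$.
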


Any simply connected toric Sasaki-Einstein $5$-manifold is diffeomorphic to 
the $m$-fold connected sum $\sharp m(S^{2}\times S^{3})$ 
of $S^{2}\times S^{3}$ for an integer $m\ge 0$ 
where $\sharp m(S^{2}\times S^{3})$ for $m=0$ means the $5$-sphere $S^{5}$. 
Boyer, Galicki, Nakamaye and Koll\'{a}r showed that 
there exist many Sasaki-Einstein metrics on 
$\sharp m(S^{2}\times S^{3})$~\cite{BGN, K} for each $m\ge 1$. 
Van Covering provided a toric Sasaki-Einstein metric on 
$\sharp m(S^{2}\times S^{3})$ for each odd $m>1$~\cite{VC}. 
Futaki, Ono and Wang showed there exists a Sasaki-Einstein metric 
on a toric Sasaki manifold such that $c_{B}^{1}>0$ and $c^{1}(D)=0$~\cite{FOW}. 
Moreover, the uniqueness of Sasaki-Einstein metrics on toric Sasaki manifold 
was proven in \cite{CFO}. 
It implies that there exists an infinite inequivalent family of 
toric Sasaki-Einstein metrics on 
$S=\sharp m(S^{2}\times S^{3})$ for any $m \ge1$. 
Let $\tau$ be the anti-holomorphic involution on 
the toric K\"{a}hler cone $C(S)$ constructed in \S 3.3. 
Then Theorem \ref{s1t1} implies the following corollary :  
\begin{cor}
For any $m\ge 1$, the link ${\rm fix}(\tau)\cap S$ is a special Legendrian submanifold 
in $\sharp m(S^{2}\times S^{3})$. 
\end{cor}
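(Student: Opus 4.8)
The plan is to recognize this as a direct consequence of Theorem~\ref{s1t1}, so that the work reduces to verifying that the manifold $S=\sharp m(S^{2}\times S^{3})$ falls within the hypotheses of that theorem: it must be compact, simply connected, and carry a toric Sasaki-Einstein structure whose cone admits the anti-holomorphic involution $\tau$ constructed in \S 3.3. No new analytic or geometric input should be required.

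First I would confirm the topological hypotheses. Compactness is immediate, since $\sharp m(S^{2}\times S^{3})$ is a closed manifold assembled as a connected sum of finitely many copies of the compact manifold $S^{2}\times S^{3}$. For simple connectivity I would apply the Seifert--van Kampen theorem to the connected-sum decomposition: each summand $S^{2}\times S^{3}$ is simply connected, and because the gluing sphere $S^{4}$ and each punctured summand remain simply connected in dimension $5\ge 3$, an induction on $m$ yields $\pi_{1}\bigl(\sharp m(S^{2}\times S^{3})\bigr)=1$.

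Next I would invoke the existence statement assembled in the paragraph preceding the corollary. Combining the construction of Sasaki-Einstein metrics on toric Sasaki manifolds with $c_{B}^{1}>0$ and $c^{1}(D)=0$ from \cite{FOW}, the uniqueness result of \cite{CFO}, and the classification of simply connected toric Sasaki-Einstein $5$-manifolds as connected sums $\sharp m(S^{2}\times S^{3})$, one obtains for every $m\ge 1$ a toric Sasaki-Einstein metric on $S$. The cone $C(S)$ is then a toric K\"{a}hler cone, so the anti-holomorphic involution $\tau$ of \S 3.3 is defined on it.

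With all hypotheses of Theorem~\ref{s1t1} verified, the conclusion follows verbatim: the link ${\rm fix}(\tau)\cap S$ is a special Legendrian submanifold of $\sharp m(S^{2}\times S^{3})$. I do not expect a genuine obstacle, since the substantive content resides entirely in Theorem~\ref{s1t1} and in the cited existence results. The only point demanding care is to confirm that the construction of $\tau$ in \S 3.3 is carried out with enough generality to cover every toric K\"{a}hler cone arising in this way, rather than merely the explicit examples $Y_{p,q}$ appearing in Theorem~\ref{s1t2}.
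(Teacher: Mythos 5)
Your proposal is correct and follows essentially the same route as the paper: the paper's own justification for this corollary is precisely to combine the existence of toric Sasaki--Einstein metrics on $\sharp m(S^{2}\times S^{3})$ for every $m\ge 1$ (via \cite{FOW}, \cite{CFO} and the surrounding classification results) with Theorem~\ref{s1t1}, using the involution $\tau$ of \S 3.3, which is constructed for an arbitrary toric K\"ahler cone and so covers all these cases. Your extra verifications (compactness, simple connectivity via van Kampen) are harmless additions to the same argument.
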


The paper is organized as follows. 
In Section 2, we recall basic facts about Sasakian geometry. 
We introduce weighted Calabi-Yau structures 
on the K\"{a}hler cones of Sasaki manifolds 
which characterize Sasaki-Einstein structures on Sasaki manifolds. 
In Section 3, we define special Legendrian submanifolds in Sasaki-Einstein manifolds 
and provide a method to find special Legendrian submanifolds by 
considering the fixed point set of an anti-holomorphic involution. 
We apply the method to toric Sasaki-Einstein manifolds, and prove Theorem~\ref{s1t1}. 
We also provide Theorem~\ref{s1t1.5} as a generalization of Theorem~\ref{s1t1}. 
We show Theorem~\ref{s1t2} and give examples of special Legendrian submanifolds. 

\section{Sasakian geometry}
In this section, we will give a brief review of 
some elementary results in Sasakian geometry. 
For much of this material, 
we refer to~\cite{BG2} and \cite{S}. 
We assume that $S$ is a smooth manifold of dimension $(2n+1)$. 

\subsection{Sasaki structures}

\begin{defi}
{\rm 
A Riemannian manifold $(S,g)$ is a {\it Sasaki manifold} if and only if 
the metric cone $(C(S),\overline{g})=(\mathbb{R}_{>0}\times S, dr^{2}+r^{2}g)$ 
is K\"{a}hler for a complex structure. 
}
\end{defi}
We identify the manifold $S$ with the hypersurface $\{r=1\}$ of $C(S)$. 
Let $J$ and $\omega$ denote the complex structure 
and the K\"{a}hler form on the K\"{a}hler manifold $(C(S),\overline{g})$, respectively. 
The vector field $r\frac{\partial}{\partial r}$ is called 
the {\it Euler vector field} on $C(S)$. 
We define a vector field $\xi$ and a $1$-form $\eta$ on $C(S)$ by 
\begin{equation*}\label{s2.1eq1}
\xi=J(r\frac{\partial}{\partial r}),\quad \eta(X)=\frac{1}{r^{2}}\overline{g}(\xi, X)
\end{equation*}
for any vector field $X$ on $C(S)$. 
The vector field $\xi$ is a Killing vector field, i.e. $L_{\xi}\overline{g}=0$, and 
$\xi+\sqrt{-1}\,J\xi=\xi-\sqrt{-1}\,r\frac{\partial}{\partial r}$ is 
a holomorphic vector field on $C(S)$. 
It follows from $L_{\xi}\eta=JL_{r\frac{\partial}{\partial r}}\eta=0$ that 
\begin{equation}\label{s2.1eq2}
\eta(\xi)=1,\quad i_{\xi}d\eta=0
\end{equation}
where $i_{\xi}$ means the interior product. 
The form $\eta$ is expressed as 
\begin{equation*}\label{s2.1eq3}
\eta=d^{c}\log r=\sqrt{-1}\,(\overline{\partial}-\partial)\log r
\end{equation*} 
where $d^{c}$ is the composition $-J\circ d$ of the exterior derivative $d$ and 
the action of the complex structure $-J$ on differential forms. 
We define an action $\lambda$ of $\mathbb{R}_{>0}$ on $C(S)$ by 
\begin{equation*}\label{s2.5eq9.5}
\lambda_{a}(r,x)=(ar,x)
\end{equation*}
for $a\in\mathbb{R}_{>0}$ and $(r,x)\in \mathbb{R}_{>0}\times S=C(S)$. 
If we put $a=e^{t}$ for $t\in\mathbb{R}$, 
then it follows from $L_{r\frac{\partial}{\partial r}}=\frac{d}{dt}\lambda_{e^{t}}^{*}|_{t=0}$ 
that $\{\lambda_{e^{t}}\}_{t\in\mathbb{R}}$ is one parameter group of transformations such that 
$r\frac{\partial}{\partial r}$ is the infinitesimal transformation. 
Then the K\"{a}hler form $\omega$ satisfies $\lambda_{a}^{*}\omega=a^{2}\omega$ for $a\in\mathbb{R}_{>0}$ and 
\begin{equation*}\label{s2.1eq4}
L_{r\frac{\partial}{\partial r}}\omega=2\omega.
\end{equation*}
It implies that  
\begin{equation}\label{s2.1eq5}
\omega=\frac{1}{2}d(r^{2}\eta)=\frac{\sqrt{-1}}{2}\partial\overline{\partial}r^{2}.
\end{equation}
Hence $\frac{1}{2}r^{2}$ is a K\"{a}hler potential on $C(S)$. 

The $1$-form $\eta$ induces the restriction $\eta|_{S}$ on $S\subset C(S)$. 
Since $L_{r\frac{\partial}{\partial r}}\eta=0$, 
the form $\eta$ is the extension of $\eta|_{S}$ to $C(S)$. 
The vector field $\xi$ is tangent to the hypersurface $\{r=c\}$ 
for each positive constant $c$. 
In particular, $\xi$ is considered as the vector field on $S$ 
and satisfies $g(\xi,\xi)=1$ and $L_{\xi}g=0$. 
Hence we shall not distinguish between $(\eta,\xi)$ on $C(S)$ and 
the restriction $(\eta|_{S},\xi|_{S})$ on $S$. 
Then the form $\eta$ is a contact $1$-form on $S$ : 
\begin{equation*}\label{s2.1eq6}
\eta\wedge (d\eta)^{n}\neq 0
\end{equation*}
since $\omega$ is non-degenerate. 
The equation (\ref{s2.1eq2}) implies that 
\begin{equation}\label{s2.1eq7}
\eta(\xi)=1,\quad i_{\xi}d\eta=0
\end{equation}
on $S$. 
For a contact form $\eta$, a vector field $\xi$ on $S$ 
satisfying the equation (\ref{s2.1eq7})  
is unique, and called the {\it Reeb vector field}. 
We define the contact subbundle $D\subset TS$ by $D=\ker\eta$. 
Then the tangent bundle $TS$ has the orthogonal decomposition 
\begin{equation*}\label{s2.1eq8}
TS=D\oplus \langle \xi\rangle
\end{equation*}
where $\langle \xi\rangle$ is the line bundle generated by $\xi$. 
We define a section $\Phi$ of ${\rm End}(TS)$ by setting 
$\Phi |_{D}=J|_{D}$ and $\Phi |_{\langle \xi\rangle}=0$. 
One can see that 
\begin{eqnarray}
\Phi^{2}=-{\rm id}+\xi\otimes\eta, \label{s2.1eq9}\\ 
d\eta(\Phi X,\Phi Y)=d\eta(X,Y) \label{s2.1eq10}
\end{eqnarray}
for any $X,Y \in TS$. 
Then the Riemannian metric $g$ satisfies 
\begin{equation}\label{s2.1eq11}
g(X,\Phi Y)=d\eta(X,Y)
\end{equation}
for any $X,Y \in TS$. 

We say a data $(\xi,\eta,\Phi,g)$ a \textit{contact metric structure} on $S$ 
if for a contact form $\eta$ and a Reeb vector field $\xi$, 
a section $\Phi$ of ${\rm End}(TS)$ and a Riemannian metric $g$ satisfy 
the equations (\ref{s2.1eq9}), (\ref{s2.1eq10}) and (\ref{s2.1eq11}). 
Moreover, a contact metric structure $(\xi,\eta,\Phi,g)$ is called 
a \textit{K-contact structure} on $S$ if $\xi$ is a Killing vector field with respect to $g$. 
The section $\Phi$ of a K-contact structure $(\xi,\eta,\Phi,g)$ defines 
an almost CR structure $(D,\Phi|_{D})$ on $S$. 
As we saw above, any Sasaki manifold $(S,g)$ has 
a K-contact structure $(\xi,\eta,\Phi,g)$ with 
the integrable CR structure $(D,\Phi|_{D}=J|_{D})$ on $S$. 
Conversely, if we have such a structure $(\xi,\eta,\Phi,g)$ on $S$, 
then $(\overline{g},\frac{1}{2}d(r^{2}\eta))$ is a K\"{a}hler structure 
on the cone $C(S)$, hence $(S,g)$ is a Sasaki manifold. 
We call a K-contact structure $(\xi,\eta,\Phi,g)$ with 
the integrable CR structure $(D,\Phi|_{D})$ a \textit{Sasaki structure} on $S$.

\subsection{The Reeb foliation}
Let $(\xi,\eta,\Phi,g)$ be a Sasaki structure on $S$. 
Then the Reeb vector field $\xi$ generates a foliation $\mathcal{F}_{\xi}$ of codimension $2n$ on $S$. 
The foliation $\mathcal{F}_{\xi}$ is called a {\it Reeb foliation}. 
A Reeb foliation $\mathcal{F}_{\xi}$ is {\it quasi-regular} 
if any orbit of the Reeb vector field $\xi$ is compact. 
Then each orbit is associated with a locally free $S^{1}$ action. 
If the $S^{1}$ action is free, $\mathcal{F}_{\xi}$ is called {\it regular}. 
If $\mathcal{F}_{\xi}$ is not quasi-regular, it is called {\it irregular}.  

A differential form $\phi$ on $S$ is called {\it basic} if 
\begin{equation*}\label{s2.2eq1}
i_{v}\phi=0,\quad  L_{v}\phi=0
\end{equation*}
for any $v\in\Gamma(\langle \xi\rangle)$. 
Let $\wedge_{B}^{k}$ be the sheaf of basic $k$-forms 
on the foliated manifold $(S,\mathcal{F}_{\xi})$. 
It is easy to see that for a basic form $\phi$ the derivative $d\phi$ is also basic. 
Thus the exterior derivative $d$ induces 
the operator $d_{B}=d|_{\wedge_{B}^{k}}:\wedge_{B}^{k}\to\wedge_{B}^{k+1}$ by the restriction. 
The corresponding complex $(\wedge_{B}^{*},d_{B})$ associates 
the cohomology group $H_{B}^{*}(S)$ which is called the {\it basic de Rham cohomology group}. 
If $\mathcal{F}_{\xi}$ is a transversely holomorphic foliation 
(see the next section for the definition), 
the associate transverse complex structure $I$ on $(S,\mathcal{F}_{\xi})$ 
give rises to the decomposition 
$\wedge^{k}_{B}\otimes\mathbb{C}=\oplus_{r+s=k}\wedge^{r,s}_{B}$ 
in the same manner as complex geometry, 
and we have the operators 
\begin{eqnarray*}
\partial_{B}:\wedge_{B}^{p,q}\to\wedge_{B}^{p+1,q} \\ 
\overline{\partial}_{B}:\wedge_{B}^{p,q}\to\wedge_{B}^{p,q+1}.
\end{eqnarray*}
We denote by $H_{B}^{p,*}(S)$ the cohomology of 
the complex $(\wedge_{B}^{p,*},\overline{\partial}_{B})$ 
which is called the {\it basic Dolbeault cohomology group}. 

On the cone $C(S)$, a foliation $\mathcal{F}_{\langle \xi,r\frac{\partial}{\partial r}\rangle}$ 
is induced by the vector bundle $\langle \xi,r\frac{\partial}{\partial r}\rangle$ 
generated by $\xi$ and $r\frac{\partial}{\partial r}$. 
Let $\widetilde{\phi}$ be a basic form  on 
$(C(S), \mathcal{F}_{\langle \xi, r\frac{\partial}{\partial r}\rangle})$, 
that is, $i_{v}\phi=L_{v}\phi=0$ for any $v\in\Gamma(\langle\xi, r\frac{\partial}{\partial r}\rangle)$. 
Then the restriction $\widetilde{\phi}|_{S}$ of $\phi$ to $S$ is also basic on $(S,\mathcal{F}_{\xi})$. 
Conversely, for any basic form $\phi$ on $(S,\mathcal{F}_{\xi})$, 
the trivial extension $\widetilde{\phi}$ of $\phi$ to $C(S)=\mathbb{R}_{>0}\times S$ is 
a basic form on $(C(S),\mathcal{F}_{\langle \xi,r\frac{\partial}{\partial r}\rangle})$. 
In this paper, we identify a basic form $\phi$ on $(S,\mathcal{F}_{\xi})$
with the extension $\widetilde{\phi}$ on 
$(C(S), \mathcal{F}_{\langle \xi,r\frac{\partial}{\partial r}\rangle})$. 

\subsection{Transverse K\"{a}hler structures}
Let $\mathcal{F}$ be a foliation of codimension $2n$ on $S$. 
Then there exists a system $\{U_{i},f_{i},\gamma_{ij}\}$ 
consisting of an open covering $\{U_{i}\}_{i}$ of $S$, 
submersions $f_{i}:U_{i}\to \mathbb{C}^{n}$ 
and diffeomorphisms $\gamma_{ij}:f_{i}(U_{i}\cap U_{j})\to f_{j}(U_{i}\cap U_{j})$ 
for $U_{i}\cap U_{j}\neq \phi$ satisfying $f_{j}=\gamma_{ij}\circ f_{i}$ 
such that any leaf of $\mathcal{F}$ is given by each fiber of $f_{i}$. 
The foliation $\mathcal{F}$ is a \textit{transverse holomorphic foliation} 
(resp. a \textit{transverse K\"{a}hler foliation}) if 
there exists a system $\{U_{i},f_{i},\gamma_{ij}\}$ such that 
$\gamma_{ij}$ is bi-holomorphic (resp. preserving a K\"{a}hler structure) of $\mathbb{C}^{n}$. 
In order to characterize transverse structures on $(S,\mathcal{F})$, 
we consider the quotient bundle $Q=TS/F$ 
where $F$ is the line bundle associated by the foliation $\mathcal{F}$. 
We define an action of $\Gamma(F)$ 
to any section $I\in \Gamma({\rm End}(Q))$ as follows : 
\begin{equation*}\label{s2.3eq1}
(L_{v}I)(u)=L_{v}(I(u))-I(L_{v}u)
\end{equation*}
for $v\in\Gamma(F)$ and $u\in\Gamma(Q)$. 
If $I$ is a complex structure of $Q$, i.e. $I^{2}=-{\rm id}_{Q}$, 
and satisfies that $L_{v}I=0$ for any $v\in\Gamma(F)$, 
then a tensor $N_{I}\in \Gamma(\otimes^{2}Q^{*}\otimes Q)$ can be defined by 
\begin{equation*}\label{s2.3eq2}
N_{I}(u,w)=[Iu,Iw]_{Q}-[u,w]_{Q}-I[u,Iw]_{Q}-I[Iu,w]_{Q}
\end{equation*}
for $u,w\in\Gamma(Q)$, 
where $[u,w]_{Q}$ denotes the bracket $\pi[\widetilde{u},\widetilde{w}]$ 
for each lift $\widetilde{u}$ and $\widetilde{w}$ by the quotient map $\pi:TS\to Q$. 
\begin{defi}
{\rm 
A section $I\in \Gamma({\rm End}(Q))$ is 
a \textit{transverse complex structure on $(S,\mathcal{F})$} 
if $I$ is a complex structure of $Q$ such that $L_{v}I=0$ for any $v\in\Gamma(F)$ and $N_{I}=0$. 
}
\end{defi}
A foliation $\mathcal{F}$ is transversely holomorphic 
if and only if there exists a transverse complex structure $I$ on $(S,\mathcal{F})$. 
If a basic $2$-form $\omega^{T}$ satisfies $d\omega^{T}=0$ and $(\omega^{T})^{n}\neq 0$, 
then we call the form $\omega^{T}$ 
a \textit{transverse symplectic structure} on $(S,\mathcal{F})$. 
We can consider the basic form $\omega^{T}$ as a tensor of $\wedge^{2}Q^{*}$. 
Then the pair $(\omega^{T},I)$ is called 
a \textit{transverse K\"{a}hler structure} on $(S,\mathcal{F})$ 
if the $2$-tensor $\omega^{T}(\cdot,I\cdot)$ is positive on $Q$ 
and $\omega^{T}(I\cdot,I\cdot)=\omega^{T}(\cdot,\cdot)$ holds. 
Then we define the $2$-tensor $g^{T}$ by 
$g^{T}(\cdot,\cdot)=\omega^{T}(\cdot,I\cdot)$ and 
call it a {\it transverse K\"{a}hler metric} on $(S,\mathcal{F})$. 

Let $(\xi,\eta,\Phi,g)$ be a Sasaki structure and $\mathcal{F}_{\xi}$ the Reeb foliation on $S$. 
We can consider $\Phi$ as a section of ${\rm End}(Q)$ since $\Phi|_{\langle \xi\rangle}=0$. 
Then $\Phi$ is a transverse complex structure on $(S,\mathcal{F}_{\xi})$ 
by the integrability of the CR structure $\Phi|_{D}$. 
Moreover, the pair $(\Phi,\frac{1}{2}d\eta)$ is a transverse K\"{a}hler structure 
with the transverse K\"{a}hler metric $g^{T}(\cdot,\cdot)=\frac{1}{2}d\eta(\cdot,\Phi\cdot)$ 
on $(S,\mathcal{F}_{\xi})$. 
We define ${\rm Ric}^{T}$ as the Ricci tensor of $g^{T}$ 
which is called the transverse Ricci tensor. 
The transverse Ricci form $\rho^{T}$ is defined 
by $\rho^{T}(\cdot,\cdot)={\rm Ric}^{T}(\cdot,\Phi\cdot)$. 
The form $\rho^{T}$ is a basic $d$-closed $(1,1)$-form on $(S,\mathcal{F}_{\xi})$ 
and defines a $(1,1)$-basic Dolbeault cohomology class $[\rho^{T}]\in H_{B}^{1,1}(S)$ 
as in the K\"{a}hler case. 
The basic class $[\frac{1}{2\pi}\rho^{T}]$ in $H_{B}^{1,1}(S)$ is called 
the {\it basic first Chern class} on $(S,\mathcal{F}_{\xi})$ 
and is denoted by $c_{1}^{B}(S)$ (for short, we write it $c_{1}^{B}$). 
We say the basic first Chern class is positive (resp. negative) 
if $c_{1}^{B}$ (resp. $-c_{1}^{B}$) is represented by a transverse K\"{a}hler form. 
This condition is expressed by $c_{1}^{B}>0$ (resp. $c_{1}^{B}<0$). 
We say that $(g^{T},\omega^{T})$ is 
a {\it transverse K\"{a}hler-Einstein structure} 
with Einstein constant $\kappa$ if 
$(g^{T},\omega^{T})$ is the transverse K\"{a}hler structure satisfying 
${\rm Ric}^{T}=\kappa g^{T}$ which is equivalent to $\rho^{T}=\kappa \omega^{T}$. 
If $S$ admits such a structure, 
then $2\pi c_{1}^{B}=\kappa[\omega^{T}]$, 
so the basic first Chern class has to be positive, zero or negative 
according to the sign of $\kappa$. 

We define a new Sasaki structure 
fixing the Reeb vector field $\xi$ and varying $\eta$ as follows. 
We define $\widetilde{\eta}$ by 
\begin{equation*}\label{s2.3eq3}
\widetilde{\eta}=\eta+2d_{B}^{c}\phi
\end{equation*}
for a basic function $\phi$ on $(S,\mathcal{F}_{\xi})$, 
where $d_{B}^{c}=-\phi\circ d_{B}=\sqrt{-1}\,(\overline{\partial}_{B}-\partial_{B})$. 
It implies that 
\begin{equation*}\label{s2.3eq4}
d\widetilde{\eta}=d\eta+2d_{B}d_{B}^{c}\phi=d\eta+2\sqrt{-1}\,\partial_{B}\overline{\partial}_{B}\phi.
\end{equation*}
If we choose a small $\phi$ such that $\widetilde{\eta}\wedge (d\widetilde{\eta})^{n}\neq 0$, 
then $\frac{1}{2}d\widetilde{\eta}$ is a transverse K\"{a}hler form 
for the same transverse complex structure $\Phi$. 
Putting  
\begin{equation*}\label{s2.3eq5}
\tilde{r}=r\exp\phi,
\end{equation*}
then we obtain 
\begin{equation*}\label{s2.3eq6}
\tilde{r}\frac{\partial}{\partial\tilde{r}}=r\frac{\partial}{\partial r}
\end{equation*} 
on the cone $C(S)$. 
It implies that the holomorphic structure $J$ on $C(S)$ is unchanged.  
The function $\frac{1}{2}\tilde{r}^{2}$ on $C(S)$ is a new K\"{a}hler potential, 
that is $\frac{1}{2}d(\tilde{r}^{2}\widetilde{\eta})=
\frac{\sqrt{-1}}{2}dd^{c}\tilde{r}^{2}$, since 
\begin{equation*}\label{s2.3eq7}
\widetilde{\eta}=\eta+2d_{B}^{c}\phi=2d^{c}\log\tilde{r}.
\end{equation*}
Thus the deformation 
\begin{equation}\label{s2.3eq8}
\eta\rightarrow  \widetilde{\eta}=\eta+2d_{B}^{c}\phi
\end{equation}
gives a new Sasaki structure with the same Reeb vector field, 
the same transverse complex structure and the same holomorphic structure of $C(S)$. 
Conversely, any other Sasaki structure on a compact manifold $S$ with the same Reeb vector field, 
the same transverse complex structure and the same holomorphic structure of $C(S)$ 
is given by the deformation (\ref{s2.3eq8}), by using 
the transverse $\partial\overline{\partial}$-lemma proved in $\cite{EK}$. 
The deformations (\ref{s2.3eq8}) are called {\it transverse K\"{a}hler deformations}.

\subsection{Sasaki-Einstein structures and weighted Calabi-Yau structures}
In this section, we assume that $S$ is a compact manifold. 
We provide the definition of Sasaki-Einstein manifolds. 
\begin{defi}
{\rm 
A Sasaki manifold $(S,g)$ is {\it Sasaki-Einstein} 
if the metric $g$ is Einstein. 
}
\end{defi}

Let $(\xi, \eta, \Phi, g)$ be a Sasaki structure on $S$. 
Then the Ricci tensor ${\rm Ric}$ of $g$ has following relations :
\begin{eqnarray*}
&&{\rm Ric}(u,\xi)=2n\eta(u),\ u\in TS \label{s2.5eq1}\\
&&{\rm Ric}(u,v)={\rm Ric}^{T}(u,v)-2g(u,v),\ u,v \in D \label{s2.5eq2}
\end{eqnarray*}
Thus the Einstein constant of a Sasaki-Einstein metric $g$ 
has to be $2n$, that is, ${\rm Ric}=2ng$. 
It follows from the above equations that the Einstein condition ${\rm Ric}=2ng$ 
is equal to ${\rm Ric}^{T}=2(n+1)g^{T}$. 
Moreover, the cone metric $\overline{g}$ is Ricci-flat on $C(S)$ if and only if 
$g$ is Einstein with the Einstein constant $2n$ on $S$ (we refer to Lemma 11.1.5 in \cite{BG2}). 
Hence we can characterize the Sasaki-Einstein condition as follows 
\begin{prop}\label{s2.5p1}
Let $(S,g)$ be a Sasaki manifold of dimension $2n+1$. 
Then the following conditions are equivalent. 
\begin{enumerate}
\item $(S,g)$ is a Sasaki-Einstein manifold. 

\item $(C(S),\overline{g})$ is Ricci-flat, that is, ${\rm Ric}_{\overline{g}}=0$. 

\item $g^{T}$ is transverse K\"{a}hler-Einstein with ${\rm Ric}^{T}=2(n+1)g^{T}$. 
$\hfill\Box$
\end{enumerate}
\end{prop}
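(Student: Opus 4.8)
The plan is to establish the three-way equivalence by proving $(1)\Leftrightarrow(3)$ directly from the two Ricci identities displayed immediately before the statement, and then obtaining $(1)\Leftrightarrow(2)$ from the standard cone--base correspondence quoted from \cite{BG2}. The whole argument is essentially tensorial bookkeeping on the splitting $TS=D\oplus\langle\xi\rangle$, since the genuinely hard curvature inputs are given.

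For $(1)\Leftrightarrow(3)$, I would first use the decomposition $TS=D\oplus\langle\xi\rangle$ to reduce the Einstein equation ${\rm Ric}=2ng$ to its $D\times D$ block. The identity ${\rm Ric}(u,\xi)=2n\eta(u)$ already disposes of every component involving $\xi$: for $u\in D$ it gives ${\rm Ric}(u,\xi)=0=2ng(u,\xi)$, because $D=\ker\eta$ is $g$-orthogonal to $\xi$ and $\eta(u)=g(\xi,u)$ on $S$, while for $u=\xi$ it gives ${\rm Ric}(\xi,\xi)=2n=2ng(\xi,\xi)$ since $g(\xi,\xi)=1$. Hence, for \emph{any} Sasaki manifold, the $\xi$-directions are automatically ``Einstein with constant $2n$'', so ${\rm Ric}=2ng$ holds on all of $TS$ if and only if ${\rm Ric}(u,v)=2ng(u,v)$ for all $u,v\in D$; this also shows that $2n$ is the only admissible Einstein constant.

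Next I would substitute the second identity ${\rm Ric}(u,v)={\rm Ric}^{T}(u,v)-2g(u,v)$, valid for $u,v\in D$, into the reduced equation. Then ${\rm Ric}(u,v)=2ng(u,v)$ is equivalent to ${\rm Ric}^{T}(u,v)=(2n+2)g(u,v)=2(n+1)g(u,v)$ on $D$. Recalling that under the natural isomorphism $Q\cong D$ the transverse metric $g^{T}=\frac{1}{2}d\eta(\cdot,\Phi\cdot)$ coincides with the restriction $g|_{D}$, this reads exactly ${\rm Ric}^{T}=2(n+1)g^{T}$, the transverse K\"ahler--Einstein condition of $(3)$. This closes $(1)\Leftrightarrow(3)$ in both directions, the reverse implication being clean precisely because the $\xi$-block is forced regardless of $(3)$.

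For $(1)\Leftrightarrow(2)$ I would simply invoke the quoted Lemma 11.1.5 of \cite{BG2}: the cone metric $\overline{g}$ is Ricci-flat exactly when $g$ is Einstein with Einstein constant $2n$; together with the observation above that $2n$ is the forced constant, this yields $(2)\Leftrightarrow(1)$ and completes the cycle. I expect no real obstacle at the level of the proposition itself, since the two transverse curvature identities and the cone Ricci-flatness criterion are being quoted rather than re-derived; the only care needed is in confirming that the $\xi$-direction is automatically Einstein and that $g^{T}$ is genuinely $g|_{D}$ under $Q\cong D$. The actual differential-geometric effort sits in those quoted identities, which would be proved via the O'Neill-type formulas relating $g$, $g^{T}$, and the curvature of the Reeb foliation; that is where a fully self-contained argument would do its work.
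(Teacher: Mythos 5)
Your proposal is correct and follows essentially the same route as the paper: the paper's proof is precisely the discussion preceding the proposition, namely the two quoted Ricci identities ${\rm Ric}(u,\xi)=2n\eta(u)$ and ${\rm Ric}(u,v)={\rm Ric}^{T}(u,v)-2g(u,v)$ on $D$ giving $(1)\Leftrightarrow(3)$, together with Lemma 11.1.5 of \cite{BG2} giving $(1)\Leftrightarrow(2)$. Your additional bookkeeping (that the $\xi$-block is automatically Einstein with constant $2n$, and that $g^{T}$ agrees with $g|_{D}$ under $Q\cong D$) only makes explicit what the paper leaves implicit.
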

We remark that Sasaki-Einstein manifolds have finite fundamental groups 
from Mayer's theorem. 
From now on, we assume that $S$ is simply connected. 
Any Sasaki-Einstein manifold associates a transverse K\"{a}hler-Einstein structure 
with positive first Chern class $c_{1}^{B}=\frac{n+1}{2\pi}[d\eta]\in H_{B}^{1,1}(S)$. 
Thus $c_{1}^{B}>0$ and $c_{1}(D)=0$ are necessary conditions 
for a Sasaki metric to admit a deformation of transverse K\"{a}hler structures 
to a Sasaki-Einstein metric. The following lemma is formalized in \cite{FOW} : 
 
\begin{lem}\label{s2.5l2}
A Sasaki manifold $(S,g)$ satisfies $c_{1}^{B}>0$ and $c_{1}(D)=0$ 
if and only if there exists a holomorphic section $\Omega$ of $K_{C(S)}$ 
with $L_{r\frac{\partial}{\partial r}}\Omega=(n+1)\Omega$ and 
\begin{equation*}\label{s2.5eq3}
\Omega\wedge \overline{\Omega}=e^{h}c_{n+1}\omega^{n+1}
\end{equation*}
for a basic function $h$ on $C(S)$, 
where $\omega=\frac{1}{2}d(r^{2}\eta)$ and 
$c_{n+1}=\frac{1}{(n+1)!}(-1)^{\frac{n(n+1)}{2}}(\frac{2}{\sqrt{-1}})^{n+1}$. 
\end{lem}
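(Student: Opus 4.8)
My plan is to prove the two implications separately, using the identifications of \S2.2 (basic forms on $(S,\mathcal{F}_{\xi})$ with their extensions to $C(S)$), the relation $\tfrac{1}{2}d\eta=\sqrt{-1}\,\partial\overline{\partial}\log r$ coming from $\eta=d^{c}\log r$, and the standard relation between the Ricci form $\rho_{\omega}$ of the cone metric and the transverse Ricci form $\rho^{T}$, namely that the basic part of $\rho_{\omega}$ equals $\rho^{T}-(n+1)d\eta$. This last relation is forced to be consistent with Proposition~\ref{s2.5p1}, since $\rho_{\omega}=0$ must correspond to $\rho^{T}=(n+1)d\eta=2(n+1)g^{T}(\cdot,\Phi\cdot)$.

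For the direction ($\Leftarrow$), suppose $\Omega$ exists with the stated properties. Since $e^{h}c_{n+1}\omega^{n+1}$ is nowhere zero, $\Omega$ is a nowhere-vanishing holomorphic section, so $K_{C(S)}$ is holomorphically trivial and $c_{1}(C(S))=0$. The holomorphic vector field $\xi-\sqrt{-1}\,r\frac{\partial}{\partial r}$ spans a trivial holomorphic line subbundle of $T^{1,0}C(S)$ whose quotient is the pullback of $(D,\Phi|_{D})$, so $c_{1}(C(S))=\pi^{*}c_{1}(D)$; as $S\hookrightarrow C(S)$ is a deformation retract, $\pi^{*}$ is injective and $c_{1}(D)=0$. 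For positivity, the Monge--Amp\`ere equation gives $\rho_{\omega}=\sqrt{-1}\,\partial\overline{\partial}\log\frac{\Omega\wedge\overline{\Omega}}{\omega^{n+1}}=\sqrt{-1}\,\partial\overline{\partial}h$, and since $h$ is basic the Ricci relation yields $[\rho^{T}]=(n+1)[d\eta]$ in $H_{B}^{1,1}(S)$, whence $c_{1}^{B}=\tfrac{n+1}{2\pi}[d\eta]>0$ because $\tfrac{1}{2}d\eta$ is a transverse K\"ahler form.

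For the direction ($\Rightarrow$), suppose $c_{1}^{B}>0$ and $c_{1}(D)=0$. The same splitting shows $c_{1}(C(S))=\pi^{*}c_{1}(D)=0$, so $K_{C(S)}$ is smoothly trivial. Under the natural map $\iota:H_{B}^{1,1}(S)\to H^{2}(S;\mathbb{R})$ one has $\iota(c_{1}^{B})=c_{1}(D)=0$ and $\iota[d\eta]=0$; since $\ker\iota$ is spanned by $[d\eta]$ (the Gysin-type sequence of the Reeb foliation), I obtain $2\pi c_{1}^{B}=\kappa[d\eta]$ with $\kappa>0$, and after the transverse homothety preserving $\mathcal{F}_{\xi}$ and $\Phi$ I normalize to $c_{1}^{B}=\tfrac{n+1}{2\pi}[d\eta]$, which is the content of the weight $n+1$ in the statement. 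Choosing a nowhere-vanishing holomorphic $(n+1,0)$-form $\Omega_{0}$, the biholomorphisms $\lambda_{a}$ give $\lambda_{a}^{*}\Omega_{0}=f_{a}\Omega_{0}$ with $f_{a}$ nowhere-vanishing holomorphic, and using uniqueness of $\Omega_{0}$ up to scale together with the normalization I show $f_{a}=a^{n+1}$, so after rescaling $\Omega$ satisfies $L_{r\frac{\partial}{\partial r}}\Omega=(n+1)\Omega$.

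Finally, $\Omega\wedge\overline{\Omega}$ and $\omega^{n+1}$ are both nowhere-vanishing $(n+1,n+1)$-forms, so $\Omega\wedge\overline{\Omega}=e^{h}c_{n+1}\omega^{n+1}$ defines a smooth $h$; both sides carry $r\frac{\partial}{\partial r}$-weight $2(n+1)$ and satisfy $L_{\xi}(\Omega\wedge\overline{\Omega})=0$ (as $L_{\xi}\Omega$ is a purely imaginary multiple of $\Omega$), so $h$ is annihilated by $L_{\xi}$ and $L_{r\frac{\partial}{\partial r}}$, i.e.\ $h$ is basic. The main obstacle I expect is the holomorphic triviality of $K_{C(S)}$: upgrading the smooth triviality supplied by $c_{1}(D)=0$ to an honest homogeneous holomorphic trivialization. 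I plan to handle this by realizing a weight-$(n+1)$ section of $K_{C(S)}$ as a transverse holomorphic volume form twisted by $r^{n+1}$ and the radial $(1,0)$-form $\tfrac{dr}{r}+\sqrt{-1}\,\eta$, where the normalization $c_{1}^{B}=\tfrac{n+1}{2\pi}[d\eta]$ is precisely what makes the relevant twisted transverse canonical bundle admit such a section, its existence then following from the transverse $\partial_{B}\overline{\partial}_{B}$-lemma and transverse Hodge theory on the compact foliated manifold $(S,\mathcal{F}_{\xi})$.
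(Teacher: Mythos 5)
Your backward implication is fine and is essentially the paper's argument. The forward implication, however, has two genuine problems. First, the step you yourself flag as ``the main obstacle'' --- producing a nowhere-vanishing \emph{holomorphic} section of $K_{C(S)}$ --- is precisely the crux of the lemma, and your proposed route (a transverse holomorphic volume form twisted by $r^{n+1}$ and $\frac{dr}{r}+\sqrt{-1}\,\eta$, to be produced by the transverse $\partial_{B}\overline{\partial}_{B}$-lemma and transverse Hodge theory) is left as a plan and does not obviously work: Hodge theory produces harmonic representatives of basic cohomology classes, not nowhere-vanishing sections of a transversely holomorphic line bundle, and for an irregular Reeb foliation there is no leaf space on which to run the usual compact K\"ahler argument. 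Note also that you never invoke the standing assumption (in force throughout \S 2.4) that $S$ is simply connected, which is indispensable exactly here. The paper's mechanism is: after the $D$-homothetic normalization $[\rho^{T}]=(n+1)[d\eta]$, the transverse $\partial\overline{\partial}$-lemma gives a \emph{basic} function $h$ with $\rho=\sqrt{-1}\,\partial\overline{\partial}h$; then $e^{h}c_{n+1}\omega^{n+1}$ defines a flat Hermitian metric on $K_{C(S)}$, and since $C(S)$ is simply connected the flat connection has trivial holonomy, so there is a global parallel section $\Omega$, which is automatically holomorphic (the connection is the Chern connection), nowhere vanishing, and by construction satisfies the Monge--Amp\`ere identity.

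Second, your derivation of the weight is invalid. From $\lambda_{a}^{*}\Omega_{0}=f_{a}\Omega_{0}$ you conclude $f_{a}=a^{n+1}$ ``by uniqueness of $\Omega_{0}$ up to scale,'' but $C(S)$ is non-compact, so a holomorphic trivialization of $K_{C(S)}$ is unique only up to multiplication by an arbitrary nowhere-vanishing holomorphic function, and $f_{a}$ need not be constant: on $C(S^{3})=\mathbb{C}^{2}\setminus\{0\}$ the form $\Omega_{0}=e^{z_{1}}dz_{1}\wedge dz_{2}$ satisfies $\lambda_{a}^{*}\Omega_{0}=a^{2}e^{(a-1)z_{1}}\Omega_{0}$. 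The paper gets the homogeneity in the opposite order: having already chosen $\Omega$ so that $\Omega\wedge\overline{\Omega}=e^{h}c_{n+1}\omega^{n+1}$ with $h$ basic, it writes $L_{r\frac{\partial}{\partial r}}\Omega=f\Omega$ with $f$ holomorphic (legitimate because $v=\frac{1}{2}(r\frac{\partial}{\partial r}-\sqrt{-1}\,\xi)$ is a holomorphic vector field and $\Omega$ trivializes $K_{C(S)}$), and then applies $L_{r\frac{\partial}{\partial r}}$ and $L_{\xi}$ to the Monge--Amp\`ere identity, using $L_{r\frac{\partial}{\partial r}}\omega=2\omega$, $L_{\xi}\omega=0$ and the basicness of $h$, to force $f+\overline{f}=2(n+1)$ and $f=\overline{f}$, i.e.\ $f=n+1$. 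So in a correct proof the Monge--Amp\`ere equation with basic $h$ must come first, and the weight $(n+1)$ is a consequence of it, not an input obtained beforehand.
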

\begin{proof}
If $c_{1}^{B}>0$ and $c_{1}(D)=0$, 
then it follows that $c_{1}^{B}=a[d\eta]$ for a positive constant $a$ 
from $c_{1}(D)=\iota c_{1}^{B}$ for the map $\iota$ in the long exact sequence 
\begin{equation*}\label{s2.5eq4}
\cdots\longrightarrow H_{B}^{0}(S)\xrightarrow{\ \ \delta \ \ } H_{B}^{2}(S)
\xrightarrow{\ \ \iota \ \ } H^{2}(S;\mathbb{R})\longrightarrow \cdots
\end{equation*}
where $\delta(a)=a[d\eta]$ and $\iota[\alpha]=[\alpha]$. 
By a $D$-homothetic transformation, 
we may assume that $[\rho^{T}]=(n+1)[d\eta]\in H_{B}^{1,1}(S)$. 
Let $\rho$ be the Ricci form of the K\"{a}hler metric $\overline{g}$. 
It follows from the transverse $\partial\overline{\partial}$-lemma 
that there exists a basic function $h$ on $C(S)$ such that 
$\rho=\sqrt{-1}\,\partial\overline{\partial}h$. 
Then $e^{h}\omega^{n+1}$ induces a flat metric on $K_{C(S)}$. 
Hence we can choose a nowhere vanishing holomorphic section $\Omega$ 
of $K_{C(S)}$ such that 
\begin{equation}\label{s2.5eq5}
\Omega\wedge \overline{\Omega}=e^{h}c_{n+1}\omega^{n+1}. 
\end{equation}
The Lie derivative 
$L_{r\frac{\partial}{\partial r}}\Omega$ 
is a holomorphic $(n+1)$-form 
since the vector field $v=\frac{1}{2}(r\frac{\partial}{\partial r}-\sqrt{-1}\xi)$ is holomorphic and 
$L_{r\frac{\partial}{\partial r}}\Omega=L_{v}\Omega+L_{\overline{v}}\Omega=L_{v}\Omega$. 
Hence there exists a holomorphic function $f$ such that 
\begin{equation*}
L_{r\frac{\partial}{\partial r}}\Omega=-\sqrt{-1}L_{\xi}\Omega=f\Omega.
\end{equation*}
Taking the Lie derivatives $L_{r\frac{\partial}{\partial r}}$ and $L_{\xi}$ on the equation (\ref{s2.5eq5}), 
then we have 
\begin{eqnarray*}
&&(f+\overline{f})\Omega\wedge\overline{\Omega}=2(n+1)e^{h}c_{n+1}\omega^{n+1}, \\
&&(f-\overline{f})\Omega\wedge\overline{\Omega}=0 
\end{eqnarray*}
since $L_{r\frac{\partial}{\partial r}}\omega=2\omega$ 
and $L_{\xi}\omega=0$. 
It yields that $f=n+1$, and hence we obtain 
\begin{equation*}
L_{r\frac{\partial}{\partial r}}\Omega=(n+1)\Omega.
\end{equation*}

Conversely, if we have such a holomorphic section $\Omega$ 
then the equation $\rho=\sqrt{-1}\,\partial\overline{\partial}h$ holds. 
It implies $\rho^{T}=(n+1)d\eta+\sqrt{-1}\,\partial\overline{\partial}h$. 
Hence we obtain $c_{1}^{B}>0$ and $c_{1}(D)=0$, and it completes the proof.  
\end{proof}

\begin{defi}\label{s5.2d1}
{\rm 
A pair $(\Omega,\omega)\in\wedge^{n+1}\otimes\mathbb{C}\oplus\wedge^{2}$ is called 
a \textit{weighted Calabi-Yau structure} on $C(S)$ 
if $\Omega$ is a holomorphic section of $K_{C(S)}$ 
and $\omega$ is a K\"{a}hler form satisfying 
the equation 
\begin{equation*}\label{s2.5eq7}
\Omega\wedge \overline{\Omega}=c_{n+1}\omega^{n+1}
\end{equation*}
where 
$c_{n+1}=\frac{1}{(n+1)!}(-1)^{\frac{n(n+1)}{2}}(\frac{2}{\sqrt{-1}})^{n+1}$ 
and 
\begin{eqnarray*}
&&L_{r\frac{\partial}{\partial r}}\Omega=(n+1)\Omega, \label{s2.5eq8}\\
&&L_{r\frac{\partial}{\partial r}}\omega=2\omega. \label{s2.5eq9}
\end{eqnarray*} 
}
\end{defi}
If there exists a weighted Calabi-Yau structure $(\Omega,\omega)$ on $C(S)$, 
then it is unique up to change 
$\Omega\to e^{\sqrt{-1}\,\theta}\Omega$ of a phase $\theta\in \mathbb{R}$. 

\begin{prop}\label{s2.5p3}
A Riemannian metric $g$ on $S$ is Sasaki-Einstein 
if and only if there exists 
a weighted Calabi-Yau structure $(\Omega,\omega)$ on $C(S)$ 
such that $\overline{g}$ is the K\"{a}hler metric. 
\end{prop}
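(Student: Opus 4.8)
The goal is to prove Proposition~\ref{s2.5p3}, which characterizes Sasaki-Einstein metrics via the existence of a weighted Calabi-Yau structure on the cone. The plan is to combine Proposition~\ref{s2.5p1} and Lemma~\ref{s2.5l2}, deriving the sharpened version of Lemma~\ref{s2.5l2} in which the basic function $h$ can be normalized away to zero.

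\begin{proof}
Suppose first that $g$ is Sasaki-Einstein. By Proposition~\ref{s2.5p1}, the cone metric $\overline{g}$ is Ricci-flat, so its Ricci form $\rho$ vanishes identically. Applying Lemma~\ref{s2.5l2}, there is a holomorphic section $\Omega$ of $K_{C(S)}$ with $L_{r\frac{\partial}{\partial r}}\Omega=(n+1)\Omega$ and $\Omega\wedge\overline{\Omega}=e^{h}c_{n+1}\omega^{n+1}$, where $\sqrt{-1}\,\partial\overline{\partial}h=\rho=0$. Since $S$ is compact and $h$ is a basic plurisubharmonic-type function with $\partial\overline{\partial}h=0$ descending to the compact transverse space, $h$ must be constant. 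Absorbing this constant into $\Omega$ by the rescaling $\Omega\to e^{-h/2}\Omega$ (which preserves holomorphicity and the homogeneity $L_{r\frac{\partial}{\partial r}}\Omega=(n+1)\Omega$), we obtain $\Omega\wedge\overline{\Omega}=c_{n+1}\omega^{n+1}$. Together with $L_{r\frac{\partial}{\partial r}}\omega=2\omega$ from equation~(\ref{s2.1eq5}), the pair $(\Omega,\omega)$ is a weighted Calabi-Yau structure whose associated K\"ahler metric is $\overline{g}$.

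Conversely, suppose a weighted Calabi-Yau structure $(\Omega,\omega)$ exists with $\overline{g}$ as its K\"ahler metric. Here the normalization constant $c_{n+1}$ is exactly the one appearing in Lemma~\ref{s2.5l2}, so the defining equation $\Omega\wedge\overline{\Omega}=c_{n+1}\omega^{n+1}$ is the special case $h=0$ of that lemma. In particular the hypotheses of Lemma~\ref{s2.5l2} are met with $h\equiv0$, which yields $c_{1}^{B}>0$ and $c_{1}(D)=0$; moreover the computation in that proof gives $\rho=\sqrt{-1}\,\partial\overline{\partial}h=0$, so $\overline{g}$ is Ricci-flat. By the equivalence in Proposition~\ref{s2.5p1}, $g$ is Sasaki-Einstein.

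The main obstacle is the normalization step asserting that the basic function $h$ with $\partial\overline{\partial}h=0$ is constant. This requires the maximum principle applied to the transverse K\"ahler structure on the compact quasi-regular quotient, or more generally an argument using that $h$ is basic and bounded on $S$ with vanishing transverse complex Hessian; the homogeneity constraint $L_{r\frac{\partial}{\partial r}}\Omega=(n+1)\Omega$ is what forces $h$ to be a genuine constant rather than a function of $r$, so that the rescaling $\Omega\to e^{-h/2}\Omega$ respects the required weight. Once this is secured, the two directions follow directly by matching the Calabi-Yau equation with the $h=0$ case of Lemma~\ref{s2.5l2} and invoking Proposition~\ref{s2.5p1}.
\end{proof}
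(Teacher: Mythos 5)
Your proof is correct and follows essentially the same route as the paper: apply Lemma~\ref{s2.5l2} to obtain $\Omega'$ with $\Omega'\wedge\overline{\Omega}'=e^{h}c_{n+1}\omega^{n+1}$, use Ricci-flatness of the cone (Proposition~\ref{s2.5p1}) to conclude that the basic function $h$ is constant, rescale $\Omega'\to e^{-c/2}\Omega'$, and, conversely, observe that the weighted Calabi-Yau equation forces $\rho=0$ so that Proposition~\ref{s2.5p1} applies. The only points to tighten are that in the forward direction you should explicitly verify the hypotheses $c_{1}^{B}>0$ and $c_{1}(D)=0$ of Lemma~\ref{s2.5l2} (immediate from $c_{1}^{B}=\frac{n+1}{2\pi}[d\eta]$ for a Sasaki-Einstein structure), and that constancy of $h$ should be justified by a transverse maximum principle or the basic Laplacian on the compact manifold $S$, since the Reeb foliation may be irregular and then no quasi-regular quotient space exists.
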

\begin{proof}
We assume that $(S,g)$ is Sasaki-Einstein. 
Then $c_{1}^{B}>0$ and $c_{1}(D)=0$ since $c_{1}^{B}=\frac{n+1}{2\pi}[d\eta]\in H_{B}^{2}(S)$. 
Hence Lemma~\ref{s2.5l2} implies that 
there exists a holomorphic section $\Omega^{\prime}$ 
of $K_{C(S)}$ satisfying $L_{r\frac{\partial}{\partial r}}\Omega^{\prime}=(n+1)\Omega^{\prime}$ 
and the equation 
\begin{equation*}\label{s2.5eq10}
\Omega^{\prime}\wedge \overline{\Omega}^{\prime}=e^{h}c_{n+1}\omega^{n+1}
\end{equation*}
for a basic function $h$ on $C(S)$, 
where $\omega=\frac{1}{2}d(r^{2}\eta)$. 
Then $h$ must be a constant $c$ since the cone $(C(S),\overline{g})$ is Ricci flat. 
We define a holomorphic section $\Omega$ of $K_{C(S)}$ 
as 
\[
\Omega=e^{-\frac{c}{2}}\Omega^{\prime}.
\] 
Then 
$(\Omega,\omega)$ is a weighted Calabi-Yau structure on $C(S)$. 

Conversely, we assume that $(\Omega,\omega)$ is a weighted Calabi-Yau structure on $C(S)$ 
with the K\"{a}hler metric $\overline{g}$. 
Then $\overline{g}$ is Ricci-flat. 
Hence Proposition~\ref{s2.5p1} implies that $(S, g)$ is a Sasaki-Einstein manifold,  
and we finish the proof. 
\end{proof}

\section{Special Legendrian submanifolds}
We assume that $(S,g)$ is a smooth compact Riemannian manifold of dimension $(2n+1)$ 
which is greater than or equal to five. 
Let $(C(S),\overline{g})$ be the metric cone of $(S,g)$.

\subsection{Special Legendrian submanifolds and special Lagrangian cones}
We assume that $(S,g)$ is a simply connected Sasaki-Einstein manifold 
and fix a weighted Calabi-Yau structure $(\Omega, \omega)$ on $C(S)$ 
such that $\overline{g}$ is the K\"{a}hler metric. 
The real part $(e^{\sqrt{-1}\,\theta}\Omega)^{\rm Re}$ of $e^{\sqrt{-1}\,\theta}\Omega$ 
is a calibration 
whose calibrated submanifolds are called {\it $\theta$-special Lagrangian submanifolds}. 
We consider such submanifolds of cone type. 
For any submanifold $L$ in $S$, 
the cone $C(L)=\mathbb{R}_{>0}\times L$ is a submanifold in $C(S)$. 
We identify $L$ with the hypersurface $\{1\}\times L$ in $C(L)$. 
Then $L$ is considered as the link $C(L)\cap S$. 
\begin{defi}\label{s3.1d1}
{\rm 
A submanifold $L$ in $S$ is {\it special Legendrian} if and only if 
the cone $C(L)$ is a $\theta$-special Lagrangian submanifold in $C(S)$ 
for a phase $\theta$. 
}
\end{defi}
A $\theta$-special Lagrangian cone $C(L)$ is a minimal submanifold in $C(S)$, 
that is, the mean curvature vector field $\widetilde{H}$ of $C(L)$ vanishes. 
Then the mean curvature vector field $H$ of the link $L$ in $S$ satisfies that 
\[
\widetilde{H}_{(r,x)}=\frac{1}{r^{2}}H_{x}
\]
at $(r,x)\in \mathbb{R}_{>0}\times S=C(S)$. 
Hence any special Legendrian submanifold $L$ is also minimal. 
Conversely, we assume that $L$ is 
a connected oriented minimal Legendrian submanifold in $S$. 
Then the cone $C(L)$ is minimal Lagrangian. 
There exists a function $\theta$ on $C(L)$ such that 
$*(\Omega|_{C(L)})=e^{\sqrt{-1}\,\theta}$ where $*$ is the Hodge operator 
with respect to the metric $\overline{g}|_{L}$ on $L$ induced by $\overline{g}$. 
We have 
\[
X(\theta)=-\omega(\widetilde{H}, X)
\] 
for any vector filed $X$ on $C(S)$ tangent to $C(L)$ (Lemma 2.1. \cite{TY}). 
It yields that $\theta$ is constant. 
Thus, the cone $C(L)$ is special Lagrangian with respect to 
a weighted Calabi-Yau structure $(e^{\sqrt{-1}\,\theta}\Omega, \omega)$ 
for a phase $\theta$. 
Hence $C(L)$ is $\theta$-special Lagrangian, 
and the link $L=C(L)\cap S$ is a special Legendrian submanifold. 
We obtain the following 
(for the case of the sphere $S^{2n+1}$, we refer to Proposition 26 \cite{H}) : 
\begin{prop}\label{s3.1p1}
A connected oriented Legendrian submanifold in $S$ is minimal 
if and only if it is special Legendrian. $\hfill\Box$
\end{prop}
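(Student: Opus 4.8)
The plan is to prove the two implications separately, in each case passing between the Legendrian link $L\subset S$ and its cone $C(L)\subset C(S)$ and using the pointwise mean curvature relation $\widetilde{H}_{(r,x)}=\frac{1}{r^{2}}H_{x}$ recorded above, which ties the mean curvature $\widetilde{H}$ of the cone to the mean curvature $H$ of the link.

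For the implication that special Legendrian implies minimal, the argument is routine. If $L$ is special Legendrian, then by Definition~\ref{s3.1d1} the cone $C(L)$ is $\theta$-special Lagrangian for some phase $\theta$, hence calibrated by $(e^{\sqrt{-1}\,\theta}\Omega)^{\rm Re}$. By the fundamental property of calibrations, $C(L)$ is volume-minimizing and therefore minimal, so $\widetilde{H}\equiv 0$. The relation $\widetilde{H}_{(r,x)}=\frac{1}{r^{2}}H_{x}$ then forces $H\equiv 0$, so the link $L$ is minimal.

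For the converse, suppose $L$ is a connected oriented minimal Legendrian submanifold. First I would note that $C(L)$ is Lagrangian, since the Legendrian condition on $L$ is exactly equivalent to the vanishing of $\omega=\tfrac12 d(r^{2}\eta)$ on the cone, and that $C(L)$ is minimal by the same mean curvature relation read in the other direction. On a Lagrangian submanifold of the Calabi-Yau cone the restriction $\Omega|_{C(L)}$ is a unit-modulus multiple of the induced volume form, so there is a phase with $*(\Omega|_{C(L)})=e^{\sqrt{-1}\,\theta}$; here the weighted Calabi-Yau normalization $\Omega\wedge\overline{\Omega}=c_{n+1}\omega^{n+1}$ is precisely what guarantees $|e^{\sqrt{-1}\,\theta}|=1$.

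The crux, and the step I expect to be the main obstacle, is showing that this phase is constant. For this I would invoke the Lagrangian angle formula $X(\theta)=-\omega(\widetilde{H},X)$ (Lemma 2.1 of \cite{TY}), valid for every vector field $X$ tangent to $C(L)$. Since $C(L)$ is minimal, $\widetilde{H}=0$, so $\theta$ is locally constant along $C(L)$; connectedness of $C(L)=\mathbb{R}_{>0}\times L$, which follows from connectedness of $L$, then gives a constant $\theta_{0}$. Hence $C(L)$ is calibrated by $(e^{\sqrt{-1}\,\theta_{0}}\Omega)^{\rm Re}$, i.e. it is $\theta_{0}$-special Lagrangian, and $L$ is special Legendrian by definition. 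The delicate point throughout is that $\theta$ is a priori only defined modulo $2\pi$; but since the formula directly controls the single-valued function $e^{\sqrt{-1}\,\theta}$ and shows it is locally constant, connectedness suffices to conclude.
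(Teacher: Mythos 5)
Your proof is correct and follows essentially the same route as the paper: the forward direction via the calibration property of the cone together with the relation $\widetilde{H}_{(r,x)}=\frac{1}{r^{2}}H_{x}$, and the converse via the Lagrangian angle and the Thomas--Yau formula $X(\theta)=-\omega(\widetilde{H},X)$, with $\widetilde{H}=0$ forcing $\theta$ to be constant. Your additional remarks (why $C(L)$ is Lagrangian, why the angle has unit modulus, and the explicit use of connectedness to pass from locally constant to constant) only make explicit details the paper leaves implicit.
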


Let $(\xi, \eta, \Phi, g)$ be the corresponding Sasaki structure on $S$. 
We also denote by $\eta$ the extension to $C(S)$. 
We provide a characterization of special Lagrangian cones in $C(S)$. 
\begin{prop}\label{s3.1p2}
An $(n+1)$-dimensional closed submanifold $\widetilde{L}$ in $C(S)$ 
is a special Lagrangian cone if and only if 
$\Omega^{\rm Im} |_{\widetilde{L}}=0$ and $\eta |_{\widetilde{L}}=0$. 
\end{prop}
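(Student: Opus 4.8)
The plan is to reduce both implications to the standard Harvey--Lawson characterization of a $0$-phase special Lagrangian submanifold, namely that an $(n+1)$-dimensional submanifold $\widetilde{L}\subset C(S)$ is special Lagrangian (calibrated by $\Omega^{\rm Re}$) precisely when $\omega|_{\widetilde{L}}=0$ and $\Omega^{\rm Im}|_{\widetilde{L}}=0$, together with the cone condition that the Euler field $r\frac{\partial}{\partial r}$ be tangent to $\widetilde{L}$. Since $\Omega^{\rm Im}|_{\widetilde{L}}=0$ appears verbatim on both sides, the whole content is to show that, for a closed $(n+1)$-dimensional submanifold, the single condition $\eta|_{\widetilde{L}}=0$ is equivalent to the conjunction ``$\omega|_{\widetilde{L}}=0$ and $\widetilde{L}$ is a cone''. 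The bridge between $\eta$ and $\omega$ is the identity $i_{r\frac{\partial}{\partial r}}\omega=r^{2}\eta$, which follows from $\omega=\frac{1}{2}d(r^{2}\eta)$, Cartan's formula, $L_{r\frac{\partial}{\partial r}}\eta=0$ and $\eta(r\frac{\partial}{\partial r})=0$; equivalently $\eta(X)=\frac{1}{r^{2}}\omega(r\frac{\partial}{\partial r},X)=\frac{1}{r^{2}}\overline{g}(\xi,X)$ for $X$ tangent to $C(S)$.

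First I would treat the forward implication. Assuming $\widetilde{L}$ is a special Lagrangian cone, the condition $\Omega^{\rm Im}|_{\widetilde{L}}=0$ is immediate. For $\eta|_{\widetilde{L}}=0$, I take any $X\in T\widetilde{L}$ and use the identity above: $\eta(X)=\frac{1}{r^{2}}\omega(r\frac{\partial}{\partial r},X)$. Because $\widetilde{L}$ is a cone, $r\frac{\partial}{\partial r}$ is tangent to $\widetilde{L}$, and because $\widetilde{L}$ is Lagrangian, $\omega$ vanishes on pairs of tangent vectors; hence $\eta(X)=0$, giving $\eta|_{\widetilde{L}}=0$.

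For the converse I would start from $\eta|_{\widetilde{L}}=0$ and first recover the Lagrangian condition. Let $\iota:\widetilde{L}\hookrightarrow C(S)$ be the inclusion; then $\iota^{*}\eta=0$, hence $\iota^{*}d\eta=d\iota^{*}\eta=0$, and since $\omega=r\,dr\wedge\eta+\frac{1}{2}r^{2}d\eta$ both terms of $\iota^{*}\omega$ vanish, so $\omega|_{\widetilde{L}}=0$. As $\dim\widetilde{L}=n+1$ is half the real dimension of $C(S)$, the submanifold $\widetilde{L}$ is Lagrangian. Next I recover the cone condition: $\eta|_{\widetilde{L}}=0$ says $\overline{g}(\xi,X)=0$ for all $X\in T\widetilde{L}$, that is, $\xi$ lies in the normal bundle $N\widetilde{L}$. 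For a Lagrangian submanifold $N\widetilde{L}=J(T\widetilde{L})$, so $\xi=JY$ for some $Y\in T\widetilde{L}$, and then $r\frac{\partial}{\partial r}=-J\xi=-J^{2}Y=Y$ is tangent to $\widetilde{L}$, using $\xi=J(r\frac{\partial}{\partial r})$ and $J^{2}=-{\rm id}$. Thus $\widetilde{L}$ is a cone, and together with $\omega|_{\widetilde{L}}=0$ and the assumed $\Omega^{\rm Im}|_{\widetilde{L}}=0$ it is a special Lagrangian cone.

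The computations are all routine once the identity $i_{r\frac{\partial}{\partial r}}\omega=r^{2}\eta$ is in hand; the one step that needs a little care is the converse passage from $\eta|_{\widetilde{L}}=0$ to the cone condition, since it uses the Lagrangian identification $N\widetilde{L}=J(T\widetilde{L})$ (itself a consequence of $\omega|_{\widetilde{L}}=0$) and the relation $r\frac{\partial}{\partial r}=-J\xi$. I expect this interplay --- deducing both the isotropy and the scaling invariance of $\widetilde{L}$ from the single vanishing $\eta|_{\widetilde{L}}=0$ --- to be the main point of the argument, whereas the special condition $\Omega^{\rm Im}|_{\widetilde{L}}=0$ simply transfers unchanged between the two descriptions.
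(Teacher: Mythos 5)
Your argument follows the paper's proof almost step for step: the forward direction uses $i_{r\frac{\partial}{\partial r}}\omega=r^{2}\eta$ together with tangency of the Euler field, and the converse uses $\omega|_{\widetilde{L}}=\frac{1}{2}d(r^{2}\eta|_{\widetilde{L}})=0$ and then tangency of $r\frac{\partial}{\partial r}$. Your derivation of that tangency through $N\widetilde{L}=J(T\widetilde{L})$ is just the metric-normal version of the paper's symplectic-orthogonal argument ($r\frac{\partial}{\partial r}$ lies in the $\omega$-orthogonal complement of $T\widetilde{L}$, which equals $T\widetilde{L}$ for a Lagrangian), and it is fine.

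There is, however, one genuine gap, at the last step of the converse: from ``$r\frac{\partial}{\partial r}$ is tangent to $\widetilde{L}$ at every point'' you immediately conclude ``thus $\widetilde{L}$ is a cone.'' Tangency only gives \emph{local} invariance under the dilation flow: each point of $\widetilde{L}$ stays in $\widetilde{L}$ under $\lambda_{a}$ for $a$ near $1$. To get the global statement $\lambda_{a}(\widetilde{L})=\widetilde{L}$ for all $a\in\mathbb{R}_{>0}$ --- which is what ``cone'' means --- you must use the hypothesis that $\widetilde{L}$ is closed, and your proof never invokes it. The implication genuinely fails without it: an annular piece $\{(r,x)\in C(L)\mid 1<r<2\}$ of a cone has the Euler field everywhere tangent but is not a cone. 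The paper closes exactly this gap with a connectedness argument: for $p\in\widetilde{L}$ the set $I_{p}=\{a\in\mathbb{R}_{>0}\mid \lambda_{a}p\in\widetilde{L}\}$ is open in $\mathbb{R}_{>0}$ (because $r\frac{\partial}{\partial r}$, tangent to $\widetilde{L}$, is the infinitesimal generator of $\lambda_{e^{t}}$) and closed (because $\widetilde{L}$ is a closed subset), hence $I_{p}=\mathbb{R}_{>0}$. Inserting this paragraph makes your proof complete, after which it coincides with the paper's.
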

\begin{proof}
We note that a special Lagrangian submanifold in $C(S)$ is characterized by 
an $(n+1)$-dimensional submanifold $\widetilde{L}$ in $C(S)$ such that 
$\Omega^{\rm Im} |_{\widetilde{L}}=0$ and $\omega |_{\widetilde{L}}=0$. 
If $\widetilde{L}$ is a special Lagrangian cone, 
then the vector field $r\frac{\partial}{\partial r}$ is tangent to $\widetilde{L}$. 
The vector fields $\xi$ and $r\frac{\partial}{\partial r}$ span a symplectic subspace 
of $T_{p}C(S)$ with respect to $\omega_{p}$ at the each point $p\in C(S)$. 
We can obtain $\eta|_{\widetilde{L}}=0$ 
since $\eta=i_{r\frac{\partial}{\partial r}}\omega$ and $\omega |_{\widetilde{L}}=0$. 

Conversely, if an $(n+1)$-dimensional submanifold $\widetilde{L}$ satisfies 
$\Omega^{\rm Im} |_{\widetilde{L}}=0$ and $\eta |_{\widetilde{L}}=0$, 
then $\widetilde{L}$ is a special Lagrangian submanifold 
since $\omega |_{\widetilde{L}}=\frac{1}{2}d(r^{2}\eta|_{\widetilde{L}})=0$. 
In order to see that $\widetilde{L}$ is a cone, we consider the set 
\[
I_{p}=\{ a \in \mathbb{R}_{>0}\mid \lambda_{a}p\in \widetilde{L}\}
\]
for each $p\in \widetilde{L}$. 
Then $I_{p}$ is a closed subset of $\mathbb{R}_{>0}$ since $\widetilde{L}$ is closed. 
On the other hand, the vector field $r\frac{\partial}{\partial r}$ 
has to be tangent to $\widetilde{L}$ 
since $\widetilde{L}$ is Lagrangian and $\eta|_{\widetilde{L}}=0$.  
The vector field $r\frac{\partial}{\partial r}$ is the infinitesimal transformation 
of the action $\lambda$. 
Therefore $I_{p}$ is open, 
and so $I_{p}=\mathbb{R}_{>0}$ for each point $p\in \widetilde{L}$. 
Hence $\widetilde{L}$ is a cone, and it completes the proof. 
\end{proof}

Many compact special Lagrangian submanifolds are obtained as the fixed point sets of 
anti-holomorphic involutions of compact Calabi-Yau manifolds. 
Bryant constructs special Lagrangian tori in Calabi-Yau $3$-folds by the method \cite{Br}. 
We apply the method to find special Legendrian submanifolds in Sasaki-Einstein manifolds. 
An anti-holomorphic involution $\tau$ of $C(S)$ is 
a diffeomorphism $\tau:C(S)\to C(S)$ with $\tau^{2}={\rm id}$ 
and $\tau_{*}\circ J=-J\circ \tau_{*}$ 
where $J$ is the complex structure on $C(S)$ induced by the Sasaki structure. 
\begin{prop}\label{s3.1p3}
We assume there exists an anti-holomorphic involution $\tau$ of $C(S)$ 
such that $\tau^{*}r=r$. 
If the set ${\rm fix}(\tau)$ is not empty, 
then the link ${\rm fix}(\tau)\cap S$ is a special Legendrian submanifold in $S$. 
\end{prop}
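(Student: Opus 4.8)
The plan is to show that $\widetilde{L}={\rm fix}(\tau)$ is a special Lagrangian cone in $C(S)$ and then to invoke Proposition~\ref{s3.1p2}, applied to a phase-rotated weighted Calabi-Yau structure, together with Definition~\ref{s3.1d1}. Concretely I must verify three things: that $\widetilde{L}$ is a smooth $(n+1)$-dimensional Lagrangian submanifold, that $\eta|_{\widetilde{L}}=0$, and that $(e^{\sqrt{-1}\theta}\Omega)^{\rm Im}|_{\widetilde{L}}=0$ for some constant phase $\theta$.

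First I would record the two basic consequences of $\tau$ being anti-holomorphic with $\tau^{*}r=r$. Since $\omega=\frac{\sqrt{-1}}{2}\partial\overline{\partial}r^{2}$ by (\ref{s2.1eq5}) and an anti-holomorphic pullback interchanges $\partial$ and $\overline{\partial}$ while fixing $\tau^{*}r^{2}=r^{2}$, one obtains $\tau^{*}\omega=-\omega$. Feeding this into $\overline{g}(X,Y)=\omega(X,JY)$ and using $\tau_{*}\circ J=-J\circ\tau_{*}$ gives $\tau^{*}\overline{g}=\overline{g}$, so $\tau$ is an isometry. In particular $\tau$ fixes the Euler field, $d\tau(r\frac{\partial}{\partial r})=r\frac{\partial}{\partial r}$, because $r\frac{\partial}{\partial r}={\rm grad}_{\overline{g}}(\frac{1}{2}r^{2})$ and $\tau$ preserves both $\overline{g}$ and $\frac{1}{2}r^{2}$.

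Next I establish the Lagrangian and contact conditions. At any $p\in\widetilde{L}$ the differential $d\tau_{p}$ is an involution anticommuting with $J$, so its $(+1)$-eigenspace $T_{p}\widetilde{L}$ is carried isomorphically onto the $(-1)$-eigenspace by $J$; hence $\widetilde{L}$ is a smooth submanifold of dimension $n+1$. For $X,Y\in T_{p}\widetilde{L}$ one has $\omega(X,Y)=(\tau^{*}\omega)(X,Y)=-\omega(X,Y)$, so $\omega|_{\widetilde{L}}=0$ and $\widetilde{L}$ is Lagrangian. Since $r\frac{\partial}{\partial r}$ is tangent to $\widetilde{L}$ and $i_{r\frac{\partial}{\partial r}}\omega=r^{2}\eta$, restricting to tangent vectors of the Lagrangian $\widetilde{L}$ gives $\eta|_{\widetilde{L}}=0$.

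The main step is the holomorphic volume form. Because $\tau$ is anti-holomorphic and $\Omega$ is a closed $(n+1,0)$-form, $\tau^{*}\Omega$ is a closed $(0,n+1)$-form, so $\tau^{*}\Omega=f\,\overline{\Omega}$ for some function $f$; closedness forces $\partial f=0$, so $f$ is anti-holomorphic. Applying $\tau^{*}$ to $\Omega\wedge\overline{\Omega}=c_{n+1}\omega^{n+1}$ and using $\tau^{*}\overline{\Omega}=\overline{f}\,\Omega$ and $\tau^{*}\omega=-\omega$ yields $|f|^{2}=1$; a unimodular anti-holomorphic function on the connected cone $C(S)$ is a constant, say $f=e^{2\sqrt{-1}\theta_{0}}$. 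Replacing $\Omega$ by $\Omega'=e^{-\sqrt{-1}\theta_{0}}\Omega$ gives $\tau^{*}\Omega'=\overline{\Omega'}$, and then for $X_{1},\dots,X_{n+1}\in T_{p}\widetilde{L}$ the relation $\Omega'(X_{1},\dots,X_{n+1})=(\tau^{*}\Omega')(X_{1},\dots,X_{n+1})=\overline{\Omega'(X_{1},\dots,X_{n+1})}$ shows that $\Omega'|_{\widetilde{L}}$ is real, i.e. $(e^{-\sqrt{-1}\theta_{0}}\Omega)^{\rm Im}|_{\widetilde{L}}=0$. Together with $\eta|_{\widetilde{L}}=0$, Proposition~\ref{s3.1p2} applied to the weighted Calabi-Yau structure $(\Omega',\omega)$ shows that $\widetilde{L}={\rm fix}(\tau)$ is a special Lagrangian cone, so by Definition~\ref{s3.1d1} the link ${\rm fix}(\tau)\cap S$ is special Legendrian. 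I expect the delicate point to be exactly the identification $\tau^{*}\Omega=f\,\overline{\Omega}$ with $f$ a genuine unimodular \emph{constant} rather than merely a unimodular function: this hinges on both the closedness of $\Omega$ (to get $\partial f=0$) and the Monge-Amp\`{e}re normalization (to get $|f|=1$), after which connectedness of $C(S)$ pins down the phase; the dimension count and the isometry/Lagrangian assertions are then formal once $\tau^{*}\omega=-\omega$ is in hand.
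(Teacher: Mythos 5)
Your proof is correct, and its overall skeleton matches the paper's: show $\tau$ is an anti-symplectic isometry, compare $\tau^{*}\Omega$ with $\overline{\Omega}$, show the ratio is a unimodular constant, rotate the phase so that $\tau^{*}\Omega'=\overline{\Omega'}$, and feed $(\Omega')^{\rm Im}|_{\widetilde{L}}=0$ together with $\eta|_{\widetilde{L}}=0$ into Proposition~\ref{s3.1p2}. Where you genuinely diverge is the step you yourself flag as delicate: the constancy of the multiplier $f$. The paper differentiates $\overline{\tau^{*}\Omega}=f\Omega$ along $r\frac{\partial}{\partial r}$ and $\xi$, using $L_{r\frac{\partial}{\partial r}}\Omega=(n+1)\Omega$, $L_{\xi}\Omega=\sqrt{-1}\,(n+1)\Omega$ and the $\tau$-invariance of $r\frac{\partial}{\partial r}$ to get $L_{r\frac{\partial}{\partial r}}f=L_{\xi}f=0$, so that $f$ descends to a basic, transversely holomorphic function on the compact manifold $S$ and is therefore constant; unimodularity then comes from the isometry property. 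You instead obtain $|f|\equiv 1$ pointwise by pulling back the Monge-Amp\`ere equation (the signs $(-1)^{n+1}$ coming from $\overline{\Omega}\wedge\Omega$ and from $(-\omega)^{n+1}$ cancel, so this is consistent), and then conclude by the maximum-modulus principle that an anti-holomorphic function of constant modulus on the connected manifold $C(S)$ is constant. Your route is more elementary and self-contained: it uses neither the weighted homogeneity $L_{r\frac{\partial}{\partial r}}\Omega=(n+1)\Omega$ nor any compactness or foliation-theoretic input for this step, only closedness of $\Omega$ (to get $\partial f=0$) and the Monge-Amp\`ere normalization. The paper's Lie-derivative argument, on the other hand, identifies $f$ as basic and transversely holomorphic, which is the mechanism that persists in settings where one has the homogeneity of $\Omega$ but no pointwise volume normalization (this is exactly the structure exploited in Lemma~\ref{s2.5l2}). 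Two further differences are cosmetic: you read off $\eta|_{\widetilde{L}}=0$ from tangency of $r\frac{\partial}{\partial r}$ and the Lagrangian condition via $i_{r\frac{\partial}{\partial r}}\omega=r^{2}\eta$, whereas the paper gets it from $\tau^{*}\eta=-\eta$; and you derive $\tau^{*}\omega=-\omega$ from the potential $\frac{\sqrt{-1}}{2}\partial\overline{\partial}r^{2}$ rather than from $\eta=d^{c}\log r$. Both are immediate, and both proofs rely on the same (correct but unstated in detail) fact that the fixed point set of an isometric anti-holomorphic involution is a smooth closed middle-dimensional submanifold.
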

\begin{proof}
Let $(\Omega, \omega)$ be a weighted Calabi-Yau structure on $C(S)$ such that 
$\omega=\frac{1}{2}d(r^{2}\eta)$. 
Then we have 
\begin{equation*}
\tau^{*}\eta=\tau^{*}\circ d^{c}\log r=
-d^{c}\circ \tau^{*}\log r=-d^{c}\log r=-\eta
\end{equation*}
since $\tau^{*}\circ d^{c}=-d^{c}\circ\tau^{*}$ and $\tau^{*}r=r$. 
It yields that $\tau^{*}\omega=-\omega$ and $\tau$ is an isometry. 
There exists a holomorphic function $f$ on $C(S)$ such that 
\begin{equation}\label{s3.1eq1}
\overline{\tau^{*}\Omega}=f\Omega. 
\end{equation}
The Lie derivative $L_{r\frac{\partial}{\partial r}}$ satisfies 
that $L_{r\frac{\partial}{\partial r}}\circ \tau^{*}=
\tau^{*}\circ L_{r\frac{\partial}{\partial r}}$ and 
$L_{r\frac{\partial}{\partial r}}\Omega =(n+1)\Omega$. 
We also have $L_{\xi}\Omega=\sqrt{-1}\,(n+1)\Omega$. 
Taking the Lie derivative $L_{r\frac{\partial}{\partial r}}$ 
on the equation (\ref{s3.1eq1}), 
then we obtain that $L_{r\frac{\partial}{\partial r}}f=0$ and $L_{\xi}f=0$. 
Thus $f$ is the pull-back of a basic and transversely holomorphic function on $S$. 
Hence $f$ is constant. 
Moreover, the equation (\ref{s3.1eq1}) implies that 
$f=e^{2\sqrt{-1}\,\theta}$ for a real constant $\theta$ since the map $\tau$ is an isometry.  
We denote by $\Omega_{\theta}$ the holomorphic $(n+1)$-form $e^{\sqrt{-1}\,\theta}\Omega$. 
Then $(\Omega_{\theta}, \omega)$ is a weighted Calabi-Yau structure on $C(S)$ such that 
\begin{equation*}
\tau^{*}\Omega_{\theta}=\overline{\Omega}_{\theta}. 
\end{equation*}
The set ${\rm fix}(\tau)$ is 
an $(n+1)$-dimensional closed submanifold, if it is not empty, 
since $\tau$ is an isometric and anti-holomorphic involution. 
We denote the manifold ${\rm fix}(\tau)$ by $\widetilde{L}$. 
Since $\tau$ is the identity map on $\widetilde{L}$, 
we have 
\begin{equation*}
\Omega_{\theta}|_{\widetilde{L}}=\tau^{*}\Omega_{\theta}|_{\widetilde{L}}=\overline{\Omega}_{\theta}|_{\widetilde{L}}. 
\end{equation*}
It yields that $\Omega_{\theta}^{\rm Im} |_{\widetilde{L}}=0$. 
Therefore, Proposition~\ref{s3.1p3} implies that 
$\widetilde{L}$ is a $\theta$-special Lagrangian cone in $C(S)$.  
Then the link $\widetilde{L}\cap S$ is a special Legendrian submanifold 
in $S$, and it completes the proof. 
\end{proof}

A real form of a K\"{a}hler manifold is 
a totally geodesic Lagrangian submanifold \cite{O}. 
We can generalize Proposition~\ref{s3.1p3} to Sasaki manifolds 
which are not necessarily Einstein and simpliy connected as follows : 
\begin{prop}\label{s3.1p4}
Let $(S,g)$ be a Sasaki manifold. 
We assume there exists an anti-holomorphic involution $\tau$ of $C(S)$ 
such that $\tau^{*}r=r$. 
If the set ${\rm fix}(\tau)$ is not empty, 
then the link ${\rm fix}(\tau)\cap S$ is a totally geodesic Legendrian submanifold in $S$. 
\end{prop}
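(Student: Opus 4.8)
The plan is to adapt the proof of Proposition~\ref{s3.1p3} to the more general Sasaki (non-Einstein, non-simply-connected) setting, isolating exactly those steps that used the weighted Calabi-Yau structure $\Omega$ and replacing them by the purely contact-geometric and Riemannian arguments that survive. The key observation is that in Proposition~\ref{s3.1p3} the holomorphic form $\Omega$ was used only to certify that $\widetilde{L}={\rm fix}(\tau)$ is \emph{special} Lagrangian; the Lagrangian condition, the Legendrian condition on the link, and ultimately the totally geodesic property all come from the interplay between $\tau$, $\eta$, and the K\"ahler metric $\overline{g}$, none of which requires the Einstein hypothesis. So the first step is to recover, verbatim from the earlier proof, the computation
\begin{equation*}
\tau^{*}\eta=\tau^{*}\circ d^{c}\log r=-d^{c}\circ\tau^{*}\log r=-d^{c}\log r=-\eta,
\end{equation*}
using $\tau^{*}\circ d^{c}=-d^{c}\circ\tau^{*}$ (valid for any anti-holomorphic $\tau$) together with the hypothesis $\tau^{*}r=r$. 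From $\omega=\tfrac{1}{2}d(r^{2}\eta)$ this gives $\tau^{*}\omega=-\omega$, and hence $\tau$ is an isometry of $(\,C(S),\overline{g}\,)$, exactly as before.

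\emph{Next I would} handle the submanifold ${\rm fix}(\tau)$. Since $\tau$ is an isometric involution, its fixed-point set $\widetilde{L}={\rm fix}(\tau)$ is, when nonempty, a totally geodesic closed submanifold of $C(S)$; this is the standard fact that the fixed locus of an isometry is totally geodesic, and it is precisely the route by which the totally geodesic conclusion will enter. Because $\tau_{*}$ is anti-holomorphic and squares to the identity, its $+1$-eigenspace at a fixed point is a totally real subspace, forcing $\dim\widetilde{L}=n+1$ and $\omega|_{\widetilde{L}}=0$; together with $\tau^{*}\eta=-\eta$ one gets $\eta|_{\widetilde{L}}=0$, since any tangent vector to $\widetilde{L}$ is $\tau_{*}$-fixed while $\tau^{*}\eta=-\eta$ forces $\eta$ to vanish on $\tau_{*}$-fixed vectors. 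Thus $\widetilde{L}$ is a Lagrangian cone on which $\eta$ vanishes, and by the reasoning of Proposition~\ref{s3.1p2} (the coning argument via $I_{p}$, which uses only the Lagrangian and $\eta|_{\widetilde{L}}=0$ conditions, not $\Omega$) it is genuinely a cone, so $\widetilde{L}=C(\widetilde{L}\cap S)$.

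\emph{Finally} I would pass from the cone $\widetilde{L}$ to its link $L=\widetilde{L}\cap S$. That $L$ is Legendrian is immediate from $\eta|_{\widetilde{L}}=0$ restricted to $S$. For the totally geodesic claim on $L$, I would argue that $L={\rm fix}(\tau)\cap\{r=1\}$ is the fixed locus of the restriction of $\tau$ to the totally geodesic (indeed isometric, since $\tau^{*}r=r$) hypersurface $S=\{r=1\}$; as $\tau$ preserves $S$ and acts there as an isometry of $(S,g)$, its fixed-point set $L$ is totally geodesic in $S$. Alternatively, and more in the spirit of the cone formalism, one relates the second fundamental form of $L$ in $S$ to that of $C(L)$ in $C(S)$ via the cone metric $dr^{2}+r^{2}g$: the cone $C(L)=\widetilde{L}$ is totally geodesic in $C(S)$, and a direct computation of the cone's second fundamental form shows it vanishes if and only if both $r\tfrac{\partial}{\partial r}$ contributes no normal component (automatic, as $r\tfrac{\partial}{\partial r}$ is tangent to the cone) and the link $L$ is totally geodesic in $S$. \textbf{The main obstacle} I anticipate is this last bookkeeping: cleanly deducing ``totally geodesic'' for $L\subset S$ from ``totally geodesic'' for $C(L)\subset C(S)$, because the warping factor $r^{2}$ mixes the radial and spherical directions in the Levi-Civita connection, so one must verify that the radial direction genuinely decouples and that no cross-terms survive. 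The cleanest route is probably to sidestep the cone computation entirely and invoke directly that $L$ is the fixed-point set of the isometric involution $\tau|_{S}$ of $(S,g)$, which is totally geodesic by the same fixed-locus theorem applied one dimension down.
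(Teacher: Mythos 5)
Your proposal is correct and follows essentially the same route as the paper: establish $\tau^{*}\eta=-\eta$ and $\tau^{*}\omega=-\omega$ so that ${\rm fix}(\tau)$ is a Lagrangian cone on which $\eta$ vanishes (via the coning argument of Proposition~\ref{s3.1p2}), observe that $\tau$ preserves $S=\{r=1\}$, and conclude by the standard fact that the fixed-point set of the isometric involution $\tau|_{S}$ of $(S,g)$ is totally geodesic. The cone-second-fundamental-form detour you raise as a possible obstacle is indeed unnecessary, and the paper avoids it exactly as you suggest.
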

\begin{proof}
We remark that $\tau$ satisfies $\tau^{*}\eta=-\eta$ and $\tau^{*}\omega=-\omega$. 
The fixed point set ${\rm fix}(\tau)$ of the anti-symplectic involution $\tau$ 
is a Lagrangian submanifold in $C(S)$ if it is not empty. 
Moreover, any closed Lagrangian submanifold where $\eta$ vanishes is a cone 
as in the proof of Proposition~\ref{s3.1p3}. 
Since $\eta|_{\widetilde{L}}=0$ holds, 
the set ${\rm fix}(\tau)$ is a Lagrangian cone in $C(S)$ 
and induces a Legendrian submanifold ${\rm fix}(\tau)\cap S$ as the link. 
The restriction $\tau|_{S}$ of $\tau$ to $S$ induces a map from $S$ to itself 
since $\tau$ preserves a level set of $r$. 
Then ${\rm fix}(\tau)\cap S$ is the fixed point set ${\rm fix}(\tau|_{S})$ of $\tau|_{S}$. 
The set ${\rm fix}(\tau|_{S})$ is totally geodesic 
since the map $\tau|_{S}$ is an isometric involution on $(S,g)$. 
Hence ${\rm fix}(\tau)\cap S$ is a totally geodesic Legendrian submanifold. 
\end{proof}

\begin{rem}\label{s3.1r1}
{\rm 
Tomassini and Vezzoni introduced 
a special Legendrian submanifold 
in a contact Calabi-Yau manifold which is a contact manifold 
with a transversely Calabi-Yau foliation~\cite{TV}. 
A contact Calabi-Yau manifold is a Sasaki manifold 
with a transversely null K\"{a}hler-Einstein structure. 
Hence it is not Sasaki-Einstein. 
} 
\end{rem}


\subsection{Toric Sasaki manifolds}
In this section, we consider the toric Sasaki manifolds. 
We refer to~\cite{CFO}, \cite{Gu} and \cite{L} 
for some facts of toric Sasaki manifolds.
We provide the definition of toric Sasaki manifolds. 
\begin{defi}\label{s3.2d1}
{\rm 
A Sasaki manifold $(S,g)$ is \textit{toric} if 
there exists an effective action of an $(n+1)$-torus $\mathbb{T}^{n+1}=G$ 
preserving the Sasaki structure such that 
the Reeb vector field $\xi$ is an element of the Lie algebra $\mathfrak{g}$ of $G$. 
Equivalently, a toric Sasaki manifold $(S,g)$ is a Sasaki manifold 
whose metric cone $(C(S),\overline{g})$ is a toric K\"{a}hler cone. 
} 
\end{defi}

We define the moment map 
\begin{equation*}\label{s3.2eq1}
\tilde{\mu}:C(S)\to \mathfrak{g}^{*} 
\end{equation*}
of the action $G$ on $C(S)$ by 
\begin{equation}\label{s3.2eq2}
\langle\tilde{\mu},\zeta\rangle=\frac{1}{2}r^{2}\eta(X_{\zeta})  
\end{equation}
for any $\zeta \in \mathfrak{g}$, 
where $X_{\zeta}$ is the vector field on $C(S)$ 
induced by $\zeta \in \mathfrak{g}$. 
Let $G_{\mathbb{C}}=(\mathbb{C}^{*})^{n+1}$ denote the complexification of $G$. 
The action $G_{\mathbb{C}}$ on the cone $C(S)$ is holomorphic and has an open dense orbit. 
The restriction of $\tilde{\mu}$ to $S$ is a moment map 
of the action $G$ on $S$. 
The equation (\ref{s3.2eq2}) implies that 
$\tilde{\mu}(S)=\{y\in \mathfrak{g}^{*} \mid 
\langle y,\xi\rangle = \frac{1}{2} \}$. 
The hyperplane $\{y\in \mathfrak{g}^{*} \mid 
\langle y,\xi\rangle = \frac{1}{2} \}$ is called 
the {\it characteristic hyperplane} \cite{BG1}. 
We define $C(\tilde{\mu})$ by 
\begin{equation*}\label{s3.2eq4}
C(\tilde{\mu})=\tilde{\mu}(C(S))\cup\{0\}. 
\end{equation*} 
Then we obtain 
\begin{equation*}\label{s3.2eq5}
C(\tilde{\mu})=\{t\xi\in\mathfrak{g}^{*} \mid \xi\in \tilde{\mu}(S),\ t\in[0,\infty) \}. 
\end{equation*}
The cone $C(\tilde{\mu})$ is called the {\it moment cone} 
of the toric Sasaki manifold. 
 
We provide the definition of a {\it good rational polyhedral cone} which is due to Lerman~\cite{L} ; 
\begin{defi}\label{s3.2d2}
{\rm 
Let $\mathbb{Z}_{\mathfrak{g}}$ be the integral lattice of $\mathfrak{g}$, 
which is the kernel of the exponential map ${\rm exp}:\mathfrak{g}\to G$. 
A subset $C$ of $\mathfrak{g}^{*}$ is a {\it rational polyhedral cone} if 
there exist an integer $d\ge n+1$ and vectors $\lambda_{i}\in \mathbb{Z}_{\mathfrak{g}}$, 
$i=1,\dots,d$, such that  
\begin{equation*}\label{s3.2eq6}
C=\{y \in \mathfrak{g}^{*} \mid \langle y,\lambda_{i}\rangle\ge 0 \ {\rm for}\ i=1,\dots, d \}. 
\end{equation*}
The set $\{\lambda_{i}\}$ is {\it minimal} if  
\begin{equation*}\label{s3.2eq7}
C\neq \{y \in \mathfrak{g}^{*} \mid \langle y,\lambda_{i}\rangle\ge 0 \ {\rm for}\ i\neq j \} 
\end{equation*}
for any $j$, and is {\it primitive} 
if there does not exist an integer $n_{i}(\ge 2)$ 
and $\lambda_{i}^{\prime}\in \mathbb{Z}_{\mathfrak{g}}$ such that 
$\lambda_{i}=n_{i}\lambda_{i}^{\prime}$ for each $i$. 
A rational polyhedral cone $C$ such that $\{\lambda_{i}\}$ is minimal 
and primitive is called {\it good} 
if $C$ has non-empty interior and satisfies the following condition : 
if 
\begin{equation*}\label{s3.2eq8}
\{y \in C\ \mid \ \langle y,\lambda_{i_{j}}\rangle =0\ {\rm for}\ j=1,\dots, k \} 
\end{equation*}
is non-empty face of $C$ for some $\{i_{1},\dots, i_{k}\}\subset \{1,\dots,d\}$, 
then $\{\lambda_{i_{1}},\dots, \lambda_{i_{k}}\}$ is linearly independent over $\mathbb{Z}$ 
and 
\begin{equation}\label{s3.2eq9}
\left\{ \sum_{j=1}^{k}a_{j}\lambda_{i_{j}}\ \Big|\  a_{j}\in \mathbb{R} \right\} \cap \mathbb{Z}_{\mathfrak{g}}=
\left\{\sum_{j=1}^{k}m_{j}\lambda_{i_{j}}\ \Big|\ m_{j}\in \mathbb{Z} \right\}
\end{equation}
}
\end{defi}
Any moment cone of toric Sasaki manifolds of ${\rm dim}\ge 5$ 
is a good rational polyhedral cone which is strongly convex, 
that is, the cone does not contain non-zero linear subspace (cf. Proposition 4.38. \cite{BG2}). 
Conversely, Given a strongly convex good rational polyhedral cone 
we can obtain a toric Sasaki manifold by Delzant construction. 

\begin{prop}\label{s3.2p1}
If $C$ is a strongly convex good rational polyhedral cone and $\xi$ is an element of 
\begin{equation*}\label{s3.2eq10}
C^{*}_{0}=\{\xi \in \mathfrak{g}\ \mid \ \langle v,\xi\rangle >0,\ ^{\forall} v\in C \}, 
\end{equation*}
then there exists a connected toric Sasaki manifold $S$ 
with the Reeb vector field $\xi$ such that the moment cone is $C$. 
\end{prop}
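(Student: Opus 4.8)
The plan is to realize $C(S)$ by a Delzant-type K\"{a}hler reduction of $\mathbb{C}^{d}$ and then to single out the Sasaki structure by the choice of Reeb field $\xi$. Write $C$ as the intersection of the half-spaces $\{\langle y,\lambda_{i}\rangle\ge 0\}$, $i=1,\dots,d$, where the $\lambda_{i}\in\mathbb{Z}_{\mathfrak{g}}$ are the primitive minimal inward normals. First I would define the surjection $\beta:\mathbb{R}^{d}\to\mathfrak{g}$ by $\beta(e_{i})=\lambda_{i}$; since $C$ has non-empty interior the $\lambda_{i}$ span $\mathfrak{g}$, so $\beta$ is onto, and the goodness condition~(\ref{s3.2eq9}) guarantees that $\beta$ carries the standard lattice $\mathbb{Z}^{d}$ onto $\mathbb{Z}_{\mathfrak{g}}$. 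Hence $\mathfrak{k}=\Ker\beta$ exponentiates to a closed subtorus $K\subset\mathbb{T}^{d}$ fitting into an exact sequence $1\to K\to\mathbb{T}^{d}\xrightarrow{\ \beta\ }G\to 1$ of tori.

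Next I would perform the K\"{a}hler quotient. Equip $\mathbb{C}^{d}$ with its flat K\"{a}hler structure and the standard Hamiltonian $\mathbb{T}^{d}$-action, whose moment map is $\mu_{\mathbb{T}^{d}}(z)=\frac{1}{2}(|z_{1}|^{2},\dots,|z_{d}|^{2})$. Composing with the dual of the inclusion $\mathfrak{k}\hookrightarrow\mathbb{R}^{d}$ gives the moment map $\mu_{K}$ for the $K$-action, and I would set $X=\mu_{K}^{-1}(0)/K$. The residual torus $G=\mathbb{T}^{d}/K$ acts holomorphically and Hamiltonianly on $X$, making $X$ a toric K\"{a}hler variety; the apex is the image of $0$, and on $X\setminus\{0\}$ the commuting scaling flow descends to a dilation $\lambda$ exhibiting $X$ as a K\"{a}hler cone with Euler field $r\frac{\partial}{\partial r}$. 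A standard computation (cf.~\cite{L,CFO}) identifies the image of the induced moment map $\tilde{\mu}:X\to\mathfrak{g}^{*}$ with precisely $C$, so the moment cone is $C$ as required.

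It remains to install the Reeb field $\xi$. On the toric K\"{a}hler cone $X$ the Reeb field is fixed by the relation $r^{2}=2\langle\tilde{\mu},\xi\rangle$ together with $\xi=J(r\frac{\partial}{\partial r})$; I would take this as the definition of $r$ and check that $\xi\in C_{0}^{*}$ makes $\langle\tilde{\mu},\cdot\rangle$ positive and proper on $X\setminus\{0\}$, so that $r$ is a genuine radial coordinate, the level set $S=\{r=1\}$ is a compact hypersurface transverse to the flow, and $(\eta=d^{c}\log r,\xi,\Phi,g)$ is a Sasaki structure on $S$ with Reeb field $\xi$, by the characterization of Section 2.1. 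Positivity of $\xi$ on $C$ is exactly what guarantees that the characteristic hyperplane $\{\langle y,\xi\rangle=\frac{1}{2}\}$ meets every ray of the moment cone, so that $S$ is compact and connected.

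The main obstacle I anticipate is the smoothness of the quotient: one must verify that $K$ acts locally freely on $\mu_{K}^{-1}(0)\setminus\{0\}$ and that the quotient is a manifold off the apex, and this is precisely where the goodness hypotheses enter. Minimality and primitivity of $\{\lambda_{i}\}$ together with the integrality condition~(\ref{s3.2eq9}) force the isotropy along each face to be trivial, equivalently they make $\beta$ restrict to a lattice isomorphism onto the sublattice spanned by the normals of that face; without this one obtains only an orbifold. Once smoothness and the identification of the moment cone with $C$ are secured, connectedness of $S$ follows from that of $X\setminus\{0\}$, and the proposition is proved.
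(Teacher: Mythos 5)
Your construction follows the same route as the paper's (Delzant-type K\"{a}hler reduction of $\mathbb{C}^{d}$ by the kernel of $\mathbb{T}^{d}\to G$, identification of the moment cone of the quotient with $C$, and then $S=\{r=1\}$ for $r=\sqrt{2\langle\tilde{\mu},\xi\rangle}$), but two of your steps do not hold up. First, the claim that goodness of $C$ forces $\beta(\mathbb{Z}^{d})=\mathbb{Z}_{\mathfrak{g}}$, so that $\Ker\beta$ exponentiates to a subtorus $K$ with $\mathbb{T}^{d}/K\cong G$, is false: the goodness condition (\ref{s3.2eq9}) constrains only the normals vanishing on each face, and the sublattice $\langle\lambda_{1},\dots,\lambda_{d}\rangle_{\mathbb{Z}}$ can be a proper subgroup of $\mathbb{Z}_{\mathfrak{g}}$ --- by Lerman's results its index computes $\pi_{1}(S)$, and good cones with $\pi_{1}(S)\neq 0$ exist (lens space quotients of $S^{5}$, for instance). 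The correct choice, and the one the paper makes, is $K=\Ker(\widetilde{\beta}:\mathbb{T}^{d}\to G)$, a compact abelian group that is in general disconnected; if you quotient only by the connected group $\exp(\mathfrak{k})$, the residual torus acting on your quotient is a finite cover of $G$ rather than $G$ itself, so your exact sequence of tori and the statement that $G$ acts on $X$ are wrong as written (the conclusion survives, but only after this repair).

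Second, and more substantively, your installation of the Reeb field has a gap. You propose to check that $\langle\tilde{\mu},\xi\rangle$ is positive and proper on $X\setminus\{0\}$ and to conclude from this that $(\eta=d^{c}\log r,\xi,\Phi,g)$ is a Sasaki structure on the level set. Positivity and properness of the function are not sufficient: by the very characterization of Section 2.1 that you invoke, you need $\frac{\sqrt{-1}}{2}\partial\overline{\partial}r^{2}=\sqrt{-1}\,\partial\overline{\partial}\langle\tilde{\mu},\xi\rangle$ to be a K\"{a}hler form for the reduced complex structure, i.e.\ $F_{\xi}=\langle\tilde{\mu},\xi\rangle$ must be strictly plurisubharmonic; otherwise $\eta$ need not be a contact form and $g$ need not be positive definite. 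This is exactly the nontrivial analytic input behind the assertion that every $\xi\in C_{0}^{*}$ is the Reeb field of a toric Sasaki structure on the fixed complex cone. The paper does not verify it by hand either, but imports it from Futaki--Ono--Wang (equation (61) of \cite{FOW}); in your write-up neither this positivity nor any substitute for it (for example the Guillemin/Martelli--Sparks--Yau symplectic-potential construction of a cone metric with prescribed Reeb vector $\xi$) appears, so the proposal as it stands does not establish that $\{r=1\}$ carries a Sasaki structure with Reeb field $\xi$.
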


\noindent
\textit{Outline of the proof.} 
Let $\{e_{1},\dots,e_{d}\}$ be the canonical basis of $\mathbb{R}^{d}$. 
The basis generates the lattice $\mathbb{Z}^{d}$. 
Let $\beta:\mathbb{R}^{d}\to\mathfrak{g}$ be the linear map defined by 
\begin{equation*}\label{s3.2eq11}
\beta(e_{i})=\lambda_{i} 
\end{equation*}
for $i=1,\dots, d$. 
Since the polyhedral cone $C$ has non-empty interior, 
there exists a basis $\{\lambda_{i_{1}},\cdots,\lambda_{i_{n+1}} \}$ of $\mathfrak{g}$ over $\mathbb{R}$. 
Thus the map $\beta$ is surjective. 
The map $\beta$ induces the map $\widetilde{\beta}$ 
from $\mathbb{T}^{d}\cong \mathbb{R}^{d}/\mathbb{Z}^{d}$ 
to $G\cong \mathfrak{g}/\mathbb{Z}_{\mathfrak{g}}$. 
Let $K$ denote the kernel of $\widetilde{\beta}$. 
Then we have  
\begin{equation*}\label{s3.2eq12}
0\to K\xrightarrow{\widetilde{\iota}} \mathbb{T}^{d} \xrightarrow{\widetilde{\beta}} G \to 0 
\end{equation*}
where $\widetilde{\iota}$ is the natural monomorphism. 
The group $K$ is a compact abelian subgroup of $\mathbb{T}^{d}$ and represented by 
$K=
\left\{ [a]\in \mathbb{T}^{d} \mid 
\sum_{i=1}^{d}a_{i}\lambda_{i} \in \mathbb{Z}_{\mathfrak{g}} \right\}$ 
where $[a]$ denotes the equivalent class of $a\in\mathbb{R}^{d}$. 
Let $\mathfrak{k}$ denote the Lie algebra of $K$. 
Then $\mathfrak{k}$ is equal to $\ker\beta$. 
Thus we obtain the exact sequence 
\begin{equation}\label{s3.2eq14}
0\to \mathfrak{k}\xrightarrow{\iota}\mathbb{R}^{d} \xrightarrow{\beta} \mathfrak{g}\to 0 
\end{equation}
where $\iota$ is the natural inclusion. 
The action of $\mathbb{T}^{d}$ on $\mathbb{C}^{d}$ is given by 
\begin{equation*}\label{s3.2eq15}
[a]\circ(z_{1},\dots,z_{d})=(e^{2\pi\sqrt{-1}\,a_{1}}z_{1},\dots,e^{2\pi\sqrt{-1}\,a_{d}}z_{d})
\end{equation*}
for $[a]=[a_{1},\dots,a_{d}]
\in \mathbb{T}^{d}\cong \mathbb{R}^{d}/\mathbb{Z}^{d}$ 
and $(z_{1},\dots,z_{d})\in\mathbb{C}^{d}$. 
This action preserves the standard K\"{a}hler form on $\mathbb{C}^{d}$. 
The corresponding moment map 
\begin{equation*}\label{s3.2eq16}
\mu_{0}:\mathbb{C}^{d}\to (\mathbb{R}^{d})^{*}
\end{equation*}
is given by 
\begin{equation*}\label{s3.2eq17}
\mu_{0}(z)=\sum_{j=1}^{d}|z_{j}|^{2}e_{j}^{*}
\end{equation*}
for $z\in \mathbb{C}^{d}$ 
where $\{e_{1}^{*},\dots,e_{d}^{*}\}$ is the dual basis to $\{e_{1},\dots,e_{d}\}$. 
We choose a basis $\{v_{1},\dots,v_{k}\}$ of $\mathfrak{k}$ 
where $k=\dim\mathfrak{k}=d-n-1$, 
then there exists an integer $k\times d$-matrix $(a_{ij})$ 
such that $\iota(v_{i})=\sum_{j=1}^{d} a_{ij}e_{j}$ for $i=1,\dots,k$. 
We also consider the following exact sequence  
\begin{equation}\label{s3.2eq18}
0\to \mathfrak{g}^{*}\xrightarrow{\beta^{*}}(\mathbb{R}^{d})^{*}\xrightarrow{\iota^{*}} \mathfrak{k}^{*} \to 0 
\end{equation}
which is the dual sequence to (\ref{s3.2eq14}). 
We define a map 
\begin{equation*}\label{s3.2eq19}
\mu:\mathbb{C}^{d}\to \mathfrak{k}^{*}
\end{equation*}
by $\mu=\iota^{*}\circ \mu_{0}$, 
then $\mu$ is a moment map of the action of $K$ on $\mathbb{C}^{d}$ 
and given by 
\begin{equation*}\label{s3.2eq20}
\mu(z)=\sum_{i=1}^{k}(\sum_{j=1}^{d}a_{ij}|z_{j}|^{2})v_{i}^{*}
\end{equation*}
for $z\in \mathbb{C}^{d}$ where $\{v_{1}^{*},\dots,v_{k}^{*}\}$ is the dual basis to $\{v_{1},\dots,v_{k}\}$. 
It follows from the exact sequence (\ref{s3.2eq18}) that 
$\mu_{0}(\mu^{-1}(0))\subset \beta^{*}\mathfrak{g}^{*}\simeq \mathfrak{g}^{*}$. 
Hence we have the map $\mu_{0}|_{\mu^{-1}(0)}:\mu^{-1}(0)\to \mathfrak{g}^{*}$. 
Moreover, it induces a map $\widetilde{\mu}$ 
from the quotient space $(\mu^{-1}(0)\backslash \{0\})/K$ to $\mathfrak{g}^{*}$ : 
\begin{equation}\label{s3.2eq21}
\widetilde{\mu}:(\mu^{-1}(0)\backslash \{0\})/K\to \mathfrak{g}^{*}.
\end{equation}
Then the map $\widetilde{\mu}$ is a moment map of 
the action $G=\mathbb{T}^{d}/K$ on $(\mu^{-1}(0)\backslash \{0\})/K$. 
The image of $\widetilde{\mu}$ is equal to $C$ 
since the image $\mu_{0}(\mu^{-1}(0))$ is precisely $\beta^{*}(C)\simeq C$. 

We define $\xi_{0}$ by the element 
\begin{equation}\label{s3.2eq22}
\xi_{0}=\sum_{i=1}^{n+1}\lambda_{i}
\end{equation}
of $\mathfrak{g}$. 
We provide a K\"{a}hler metric on $(\mu^{-1}(0)\backslash \{0\})/K$ by the K\"{a}hler reduction. 
Then the function 
\begin{equation*}\label{s3.2eq23}
F_{0}(z)=\langle \widetilde{\mu}(z), \xi_{0} \rangle
\end{equation*}
is a K\"{a}hler potential on $(\mu^{-1}(0)\backslash \{0\})/K$. 
We define $r_{0}$ by the function 
\begin{equation*}\label{s3.2eq24}
r_{0}=\sqrt{2F_{0}}  
\end{equation*}
on $(\mu^{-1}(0)\backslash \{0\})/K$. 
It yield that the K\"{a}hler potential is 
$\frac{1}{2}r_{0}^{2}=F_{0}$ and 
the manifold 
\begin{equation*}\label{s3.2eq25}
S=(\mu^{-1}(0)\cap S^{2d-1})/K  
\end{equation*}
is the hypersurface $\{r_{0}=1\}$ in $(\mu^{-1}(0)\backslash \{0\})/K$. 
Then the cone $C(S)$ of $S$ is obtained as $(\mu^{-1}(0)\backslash \{0\})/K$ : 
\begin{equation*}\label{s3.2eq26}
C(S)=(\mu^{-1}(0)\backslash \{0\})/K.
\end{equation*}
The manifold $S$ admits a Sasaki structure with the Reeb vector filed $\xi_{0}$ 
such that the following embedding from $S$ into $C(S)$ is isometric : 
\begin{equation*}\label{s3.2eq27}
S= \{r_{0}=1\}\subset C(S). 
\end{equation*}
Given an element $\xi \in C^{*}_{0}$, we can obtain 
a K\"{a}hler potential $F_{\xi}$ defined by 
\begin{equation*}\label{s3.2eq28}
F_{\xi}(z)=\langle \widetilde{\mu}(z), \xi \rangle
\end{equation*}
for $z\in C(S)$ (see (61) in \cite{FOW}). 
We denote by $H_{\xi}$ the hypersurface 
\begin{equation*}\label{s3.2eq29}
H_{\xi}=\mu^{-1}_{0}(\{ y\in \mathfrak{g}^{*} \mid \langle y, \xi \rangle =\frac{1}{2} \})
\end{equation*}
in $\mathbb{C}^{d}$, which is the inverse image 
of the characteristic hyperplane by $\mu_{0}$. 
We define a non-negative function $r$ on $C(S)$ by 
\begin{equation*}\label{s3.2eq30}
r=\sqrt{2F_{\xi}}.
\end{equation*}
Then the manifold  
\begin{equation*}\label{s3.2eq31}
S_{\xi}=(\mu^{-1}(0)\cap H_{\xi})/K
\end{equation*}
is the hypersurface $\{ r=1 \}$ in $C(S)$. 
We remark that $S_{\xi}$ is also the inverse image of 
the characteristic hyperplane by $\widetilde{\mu}$ : 
\begin{equation*}\label{s3.2eq32}
S_{\xi}=\widetilde{\mu}^{-1}
(\{ y\in \mathfrak{g}^{*} \mid \langle y, \xi \rangle =\frac{1}{2} \}). 
\end{equation*} 
Thus it follows from $S=S_{\xi_{0}}$ that there exists a diffeomorphism 
\begin{equation*}\label{s3.2eq33}
S\simeq S_{\xi}.
\end{equation*}
The manifold $S$ can be embedded in $C(S)$ as the hypersurface $S_{\xi}=\{ r=1 \}$ :
\begin{equation}\label{s3.2eq34}
S\simeq \{r=1\}\subset C(S). 
\end{equation}
Then $S$ is a Sasaki manifold such that $\xi$ is the Reeb vector field 
and the embedding (\ref{s3.2eq34}) is isometric. $\hfill\Box$

Toric Sasaki manifolds are constructed by 
a strongly convex good rational polyhedral cone $C$ 
and a Reeb vector field $\xi\in C_{0}^{*}$. 
Then the K\"{a}hler potential can be taken by $F_{\xi}$ 
as in the proof of Proposition~\ref{s3.2p1}. 
Any toric Sasaki structure with the same Reeb vector field $\xi$ 
and the same holomorphic structure on $C(S)$ is given by 
deformations of transverse K\"{a}hler structures (See Section 2.3). 
Martelli, Sparks and Yau proved the following in \cite{MSY} :
\begin{lem}
The moduli space of toric K\"{a}hler cone metrics on $C(S)$ is 
\begin{equation*}\label{s3.2eq35}
C_{0}^{*}\times \mathcal{H}^{1}(C)
\end{equation*}
where $\xi\in C_{0}^{*}$ is the Reeb vector field and 
$\mathcal{H}^{1}(C)$ denotes the space of homogeneous degree one functions on $C$ 
such that each element $\phi$ is smooth up to the boundary and 
$\sqrt{-1}\,\partial\overline{\partial}(F_{\xi}\exp{2\widetilde{\mu}^{*}\phi})$ 
is positive definite on $C(S)$. $\hfill\Box$
\end{lem}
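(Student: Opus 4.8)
The plan is to reduce the classification to the symplectic (action--angle) description of toric K\"ahler cones carried out by Martelli, Sparks and Yau~\cite{MSY}, and to match the two resulting degrees of freedom with the factors $C_{0}^{*}$ and $\mathcal{H}^{1}(C)$. The complex structure $J$ on $C(S)$ is fixed once the good cone $C$ is fixed, since $C(S)$ is realized as an affine toric K\"ahler variety determined by $C$ (the K\"ahler reduction of Proposition~\ref{s3.2p1}); thus a \emph{toric K\"ahler cone metric} means a K\"ahler metric compatible with this fixed $J$ and invariant under $G=\mathbb{T}^{n+1}$ whose K\"ahler form satisfies $L_{r\frac{\partial}{\partial r}}\omega=2\omega$, equivalently a torus-invariant K\"ahler potential $\frac{1}{2}r^{2}$ homogeneous of degree two under the Euler field. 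First I would pass to symplectic coordinates via $\widetilde{\mu}$: on the open dense orbit the moment coordinates $y\in C^{\circ}$ together with angle coordinates on $\mathbb{T}^{n+1}$ trivialize $C(S)$, and the metric is encoded by a strictly convex symplectic potential $G$ on $C^{\circ}$ with $g=\sum G_{ij}\,dy_{i}dy_{j}+\sum G^{ij}\,d\theta_{i}d\theta_{j}$, where $G_{ij}=\partial^{2}G/\partial y_{i}\partial y_{j}$ and $(G^{ij})$ is the inverse Hessian. The cone condition forces $G$ to be homogeneous of degree one, and smoothness of the metric across the toric boundary imposes the Guillemin--Abreu conditions dictated by the primitive normals $\lambda_{i}$.

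Next I would isolate the Reeb direction. The Reeb vector field $\xi=J(r\frac{\partial}{\partial r})$ is recovered from $G$ as the image of the Euler field, and the characteristic hyperplane relation $\langle y,\xi\rangle=\frac{1}{2}$ on $S$ coming from (\ref{s3.2eq2}) shows that $\xi$ enters the potential only through $F_{\xi}(z)=\langle\widetilde{\mu}(z),\xi\rangle$, i.e. $r^{2}=2F_{\xi}$. Proposition~\ref{s3.2p1} already shows that every $\xi\in C_{0}^{*}$ is realized and produces a canonical background potential $F_{\xi}$, equivalently a canonical symplectic potential $G_{\xi}$ whose singular part is fixed by the combinatorics of $C$. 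This accounts for the factor $C_{0}^{*}$.

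Fixing $\xi$, the remaining freedom is exactly the transverse K\"ahler deformation of \S2.3: any other toric K\"ahler cone metric with the same Reeb vector and the same $J$ is obtained by $r\to\widetilde{r}=r\exp\phi$ with $\phi$ basic, by the transverse $\partial\overline{\partial}$-lemma of~\cite{EK}. In the symplectic picture this is the addition to $G$ of a smooth homogeneous degree-one function on $C$; transported to the complex side by the Legendre transform it replaces the potential $F_{\xi}$ by $F_{\xi}\exp(2\widetilde{\mu}^{*}\phi)$, and the requirement that the deformed metric stay K\"ahler is precisely the positivity of $\sqrt{-1}\,\partial\overline{\partial}(F_{\xi}\exp(2\widetilde{\mu}^{*}\phi))$, which is the defining condition of $\mathcal{H}^{1}(C)$. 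Combining the two steps identifies the moduli space with $C_{0}^{*}\times\mathcal{H}^{1}(C)$, and conversely each such pair yields a genuine metric.

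The main obstacle is the boundary analysis. One must show that the singular part of the symplectic potential is rigidly dictated by the good-cone data $\{\lambda_{i}\}$, so that it is common to every metric and cannot itself be deformed, and hence that the deformation parameter is genuinely a \emph{smooth} homogeneous degree-one function up to the boundary; this is where the goodness condition (\ref{s3.2eq9}) and the Guillemin normal-form computation at the faces of $C$ are essential. The companion point --- that \emph{every} deformation with fixed Reeb and fixed $J$ arises this way, with no further freedom --- is exactly what the transverse $\partial\overline{\partial}$-lemma for the foliation $\mathcal{F}_{\langle\xi,r\frac{\partial}{\partial r}\rangle}$ supplies, so the global rigidity is reduced to that lemma together with the Delzant construction of Proposition~\ref{s3.2p1}.
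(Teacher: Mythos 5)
The first thing to note is that the paper contains no proof of this lemma at all: it is stated as a quoted result of Martelli, Sparks and Yau, closed with a box, so your sketch can only be measured against the argument of \cite{MSY}, whose general strategy (action--angle coordinates, symplectic potentials, Guillemin--Abreu boundary analysis) you correctly identify. However, the central step of your reduction is self-contradictory. You require the symplectic potential $G$ to be strictly convex on the interior of $C$ and simultaneously homogeneous of degree one. These are incompatible: Euler's identity $\sum_{i} y_{i}\partial G/\partial y_{i}=G$, differentiated once more, gives $\sum_{i}y_{i}G_{ij}=0$, so the Hessian of any degree-one homogeneous function annihilates the position vector and is degenerate everywhere; in particular $(G^{ij})$ does not exist and the metric ansatz $g=\sum G_{ij}dy_{i}dy_{j}+\sum G^{ij}d\theta_{i}d\theta_{j}$ is meaningless. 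The correct cone condition, and the actual content of \cite{MSY}, is that the \emph{Hessian} $G_{ij}$ is homogeneous of degree $-1$, which forces $G$ to contain log-singular terms: $G=G^{\rm can}+G_{\xi}+g$ with $G^{\rm can}=\frac{1}{2}\sum_{i=1}^{d}\langle y,\lambda_{i}\rangle\log\langle y,\lambda_{i}\rangle$, a $\xi$-dependent singular piece $G_{\xi}$, and only the correction term $g$ smooth and homogeneous of degree one. This is not a technicality, because the Reeb vector is read off from the Hessian by $\xi_{j}=2\sum_{i}y_{i}G_{ij}$; a degree-one homogeneous $g$ satisfies $\sum_{i}y_{i}g_{ij}=0$ and therefore never moves the Reeb vector, while your homogeneous $G$ would force $\xi=0$. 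Consequently, in your setup the factor $C_{0}^{*}$ cannot appear in the symplectic data at all: your remark that $\xi$ enters ``only through $F_{\xi}$'' conflates the complex-side K\"ahler potential with the symplectic potential and hides exactly the piece $G_{\xi}$ whose existence, singularity structure, and positivity near the boundary (this is where goodness of $C$ enters) must be established.

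The second problem is that the parts you defer are the theorem itself. The boundary analysis you call ``the main obstacle'' --- that the singular part of $G$ is rigidly fixed by $\{\lambda_{i}\}$ and $\xi$, and that the residual freedom is exactly a function smooth up to the boundary and homogeneous of degree one --- is the substance of the result, not a final check. Similarly, your completeness argument invokes the transverse $\partial\overline{\partial}$-lemma of \cite{EK} (legitimately, on the compact manifold $S$ as in Section~2.3 of the paper), but to conclude that the deformation lands in $\mathcal{H}^{1}(C)$ you still need three unproven steps: that the basic deformation function can be taken torus-invariant; that a smooth invariant basic function on $S$ corresponds to a function on the cone $C$ smooth up to its boundary faces; and that the Legendre-transform dictionary converts the additive deformation $G\to G+g$ into the multiplicative one $F_{\xi}\to F_{\xi}\exp(2\widetilde{\mu}^{*}\phi)$ --- the last of these is asserted as if $g$ and $\phi$ were the same function, which they are not. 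As written, the proposal assumes the key content of \cite{MSY} rather than supplying it.
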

We identify an element $\phi$ of $\mathcal{H}^{1}(C)$ with 
the pull-back $\widetilde{\mu}^{*}\phi$ by 
the moment map $\widetilde{\mu}$ as in (\ref{s3.2eq21}). 
Then, for any element $(\xi,\phi)\in C_{0}^{*}\times \mathcal{H}^{1}(C)$ 
we can define the function $r$ on $C(S)$ by 
\begin{equation*}\label{s3.2eq36}
r=\sqrt{2F_{\xi}}\exp{\phi}.
\end{equation*}
Let $S_{\xi,\phi}$ denote the hypersurface $\{r=1\}$ in $C(S)$ : 
\begin{equation*}\label{s3.2eq37}
S_{\xi,\phi}=\{r=1\}.
\end{equation*}
It is easy to see that $S=S_{\xi_{0},0}$ where $\xi_{0}$ is given by (\ref{s3.2eq22}). 
There exists a diffeomorphism 
\begin{equation*}\label{s3.2eq38}
S\simeq S_{\xi,\phi} 
\end{equation*} 
for any $(\xi,\phi)\in C_{0}^{*}\times \mathcal{H}^{1}(C)$. 
Hence $S$ can be embedded in $C(S)$ as $S_{\xi,\phi}$ :
\begin{equation*}\label{s3.2eq39}
S\simeq \{r=1\}\subset C(S). 
\end{equation*} 
Then $S$ admits a Sasaki structure with the Reeb vector field $\xi$ 
and the K\"{a}hler potential $\frac{1}{2}r^{2}$ on $C(S)$. 
Thus the deformation 
\begin{equation}\label{s3.2eq40}
(\xi,\phi)\to (\xi^{\prime},\phi^{\prime})
\end{equation}
of $C_{0}^{*}\times \mathcal{H}^{1}(C)$ induces 
a deformation of Sasaki structure on $S$. 
These deformations are called {\it deformations of toric Sasaki structures} on $S$. 

\subsection{Main theorems}
Let $(S,g)$ be a toric Sasaki manifold. 
The metric cone $C(S)$ is given by the K\"{a}hler quotient 
$C(S)=(\mu^{-1}(0)\backslash \{0\})/K$ for 
the moment map $\mu:\mathbb{C}^{d}\to \mathfrak{k}^{*}$ 
as in the proof of Proposition~\ref{s3.2p1}. 
Then there exists an anti-holomorphic involution $\tau$ on $C(S)$ as follows. 
We consider the anti-holomorphic involution 
$\widetilde{\tau}:\mathbb{C}^{d}\to \mathbb{C}^{d}$ defined by 
\begin{equation*}
\widetilde{\tau}(z)=\overline{z}
\end{equation*}
for $z\in\mathbb{C}^{d}$. 
The inverse image $\mu^{-1}(0)$ is invariant under the map $\widetilde{\tau}$. 
Thus $\widetilde{\tau}$ induces a diffeomorphism of $\mu^{-1}(0)$. 
Moreover, $\widetilde{\tau}$ maps a $K$-orbit to another $K$-orbit. 
Hence we can define a map $\tau:C(S)\to C(S)$ by 
\begin{equation*}
\tau[z]=[\widetilde{\tau}(z)]=[\overline{z}]
\end{equation*}
for $[z]\in (\mu^{-1}(0)\backslash \{0\})/K=C(S)$. 
Then $\tau$ is an anti-holomorphic involution of $C(S)$. 
We recall that 
the group $G_{\mathbb{C}}$ acts holomorphically on the cone $C(S)$ 
with an open dense orbit. 
We denote by $X_{0}$ the open dense orbit of $G_{\mathbb{C}}$. 
Since the orbit $X_{0}$ is identified with $(\mathbb{C}^{*})^{n+1}$, 
we can give a coordinate $w=(w_{1},\dots,w_{n+1})$ on $X_{0}$ as 
$u_{i}=e^{w_{i}}$ for any 
$u=(u_{1},\dots,u_{n+1})\in X_{0}\subset (\mathbb{C}^{*})^{n+1}$. 
Then the map $\tau$ is given by 
\[
\tau(w)=\overline{w}
\]
on the coordinate $(X_{0},w)$ on $C(S)$. 
Hence the set ${\rm fix}(\tau)$ is non-empty. 

\begin{thm}\label{s3.3t1}
Let $(S,g)$ be a compact simply connected toric Sasaki-Einstein manifold. 
Then the link ${\rm fix}(\tau)\cap S$ is a special Legendrian submanifold. 
\end{thm}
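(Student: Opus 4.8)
The plan is to reduce the statement to Proposition~\ref{s3.1p3}, whose hypotheses are exactly that $\tau$ is an anti-holomorphic involution of $C(S)$ with $\tau^{*}r=r$ and ${\rm fix}(\tau)\neq\emptyset$. Two of these three inputs are already in hand from the construction in this section: $\tau$ is an anti-holomorphic involution of $C(S)$, and in the coordinate $w$ on the open dense orbit $X_{0}$ it acts by $\tau(w)=\overline{w}$, so ${\rm fix}(\tau)$ is non-empty. Thus the only genuine point to establish is the compatibility $\tau^{*}r=r$ for the radial function $r$ of the given Sasaki-Einstein structure.

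First I would record the invariance of the ambient moment map under complex conjugation. Since $\mu_{0}(z)=\sum_{j}|z_{j}|^{2}e_{j}^{*}$ and $|\overline{z}_{j}|^{2}=|z_{j}|^{2}$, we have $\mu_{0}\circ\widetilde{\tau}=\mu_{0}$; applying $\iota^{*}$ gives $\mu\circ\widetilde{\tau}=\mu$, which is exactly why $\widetilde{\tau}$ preserves $\mu^{-1}(0)$, and passing to the $K$-quotient the induced map $\widetilde{\mu}$ on $C(S)$ satisfies $\widetilde{\mu}\circ\tau=\widetilde{\mu}$. Next I would invoke the description of toric K\"ahler cone metrics: by the Martelli--Sparks--Yau lemma the radial function of any toric Sasaki structure has the form $r=\sqrt{2F_{\xi}}\,\exp(\widetilde{\mu}^{*}\phi)$ with Reeb field $\xi\in C_{0}^{*}$ and $\phi\in\mathcal{H}^{1}(C)$, while $F_{\xi}=\langle\widetilde{\mu},\xi\rangle$. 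Both factors are built from $\widetilde{\mu}$, so the $\tau$-invariance of $\widetilde{\mu}$ yields $r\circ\tau=r$, that is $\tau^{*}r=r$. This applies in particular to the Sasaki-Einstein structure of the hypothesis, whose radial function is of this form; since transverse K\"ahler deformations leave the complex structure on $C(S)$ unchanged (Section 2.3), $\tau$ is still anti-holomorphic for the relevant $J$.

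With $\tau$ an anti-holomorphic involution, $\tau^{*}r=r$, and ${\rm fix}(\tau)\neq\emptyset$, Proposition~\ref{s3.1p3} applies verbatim and shows that the link ${\rm fix}(\tau)\cap S$ is a special Legendrian submanifold, completing the proof. The only real content beyond bookkeeping is the identity $\tau^{*}r=r$, and I expect the one subtlety to be confirming that the Einstein radial function genuinely depends on the moment image $\widetilde{\mu}$ alone, so that conjugation-invariance of $\widetilde{\mu}$ propagates to $r$; this is precisely what the toric moduli description supplies.
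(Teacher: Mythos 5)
Your proposal is correct and follows essentially the same route as the paper: the paper's proof also writes the K\"ahler potential of the given structure as $\tfrac{1}{2}r^{2}=F_{\xi}\exp 2\phi$ for $(\xi,\phi)\in C_{0}^{*}\times\mathcal{H}^{1}(C)$, deduces $\tau^{*}r=r$ from the $\tau$-invariance of the moment map $\tilde{\mu}$, and then applies Proposition~\ref{s3.1p3}. Your explicit verification that $\mu_{0}\circ\widetilde{\tau}=\mu_{0}$ descends to $\widetilde{\mu}\circ\tau=\widetilde{\mu}$, and your remark that $\tau$ stays anti-holomorphic under transverse K\"ahler deformations, merely spell out steps the paper leaves implicit.
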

\begin{proof} 
Let $S$ be a toric Sasaki-Einstein manifold with the Sasaki structure induced by 
the element $(\xi,\phi)\in C_{0}^{*}\times \mathcal{H}^{1}(C)$. 
Then the K\"{a}hler potential on $C(S)$ is 
\begin{equation*}
\frac{1}{2}r^{2}=F_{\xi}\exp{2\phi}. 
\end{equation*} 
It follows from $\tau^{*}\tilde{\mu}=\tilde{\mu}$ 
that $F_{\xi}$ and $\phi$ are also $\tau$-invariant. 
It gives rise to 
\[
\tau^{*}r=r.
\]
Hence, Proposition~\ref{s3.1p3} implies that 
the link ${\rm fix}(\tau)\cap S$ is a special Legendrian submanifold in $S$. 
It completes the proof. 
\end{proof}
\begin{rem}
{\rm 
In the case that $S$ is not simply connected, 
the canonical line bundle $K_{C(S)}$ is not necessarily trivial. 
However, the $l$-th power $K_{C(S)}^{l}$ of $K_{C(S)}$ is 
trivial for some integer $l$. 
Hence, we can remove the condition that $S$ is simply connected 
in Theorem \ref{s3.3t1} by considering nowhere vanishing 
holomorphic sections of $K_{C(S)}^{l}$ instead of $K_{C(S)}$. 
Then we need to define a special Lagrangian submanifold in $C(S)$ 
as a Lagrangian submanifolds whose $l$-th covering is a special Lagrangian submanifold 
in the $l$-th covering of $C(S)$. 
}
\end{rem}

In the proof of Theorem~\ref{s3.3t1}, 
we only need the Einstein condition of $(S,g)$ to use Proposition~\ref{s3.1p3}. 
By applying Proposition~\ref{s3.1p4} in stead of Proposition~\ref{s3.1p3}, 
we can prove the following : 
\begin{thm}\label{s3.3t2}
Let $S$ be a compact toric Sasaki manifold. 
Then the link ${\rm fix}(\tau)\cap S$ is 
a totally geodesic Legendrian submanifold in $S$. $\hfill\Box$
\end{thm}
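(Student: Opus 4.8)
The plan is to mimic the proof of Theorem~\ref{s3.3t1} almost verbatim, substituting Proposition~\ref{s3.1p4} for Proposition~\ref{s3.1p3}; the weaker conclusion---totally geodesic Legendrian rather than special Legendrian---is precisely what lets us discard both the Einstein hypothesis and simple connectivity. The Einstein condition entered the earlier argument only through the weighted Calabi-Yau structure $(\Omega,\omega)$ demanded by Proposition~\ref{s3.1p3}, whereas Proposition~\ref{s3.1p4} asks for no such structure and applies to an arbitrary Sasaki manifold, so nothing in the reduction need change apart from the final citation.

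First I would observe that the construction of the anti-holomorphic involution $\tau$ in Section~3.3 uses neither the Einstein condition nor simple connectivity: it rests solely on the Delzant-type presentation $C(S)=(\mu^{-1}(0)\backslash\{0\})/K$ supplied by Proposition~\ref{s3.2p1}, which is available for every compact toric Sasaki manifold. Hence $\tau[z]=[\overline{z}]$ is a well-defined anti-holomorphic involution of $C(S)$, and its expression $\tau(w)=\overline{w}$ on the open dense orbit $X_{0}$ shows that ${\rm fix}(\tau)$ is non-empty.

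Next I would verify the one remaining hypothesis $\tau^{*}r=r$ of Proposition~\ref{s3.1p4}, exactly as in the proof of Theorem~\ref{s3.3t1}. The flat moment map $\mu_{0}(z)=\sum_{j}|z_{j}|^{2}e_{j}^{*}$ is invariant under conjugation $z\mapsto\overline{z}$, so the induced moment map satisfies $\widetilde{\mu}\circ\tau=\widetilde{\mu}$, that is $\tau^{*}\widetilde{\mu}=\widetilde{\mu}$. Presenting the Sasaki structure by an element $(\xi,\phi)\in C_{0}^{*}\times\mathcal{H}^{1}(C)$, the K\"{a}hler potential is $\frac{1}{2}r^{2}=F_{\xi}\exp(2\phi)$ with $F_{\xi}(z)=\langle\widetilde{\mu}(z),\xi\rangle$; since both $F_{\xi}$ and the pulled-back function $\widetilde{\mu}^{*}\phi$ factor through $\widetilde{\mu}$, they are $\tau$-invariant, whence $\tau^{*}r=r$. (As in the proof of Proposition~\ref{s3.1p3}, this automatically makes $\tau$ anti-symplectic and isometric, so no extra verification is needed.)

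Finally I would invoke Proposition~\ref{s3.1p4}: as $\tau$ is an anti-holomorphic involution of $C(S)$ with $\tau^{*}r=r$ and non-empty fixed locus, the link ${\rm fix}(\tau)\cap S$ is a totally geodesic Legendrian submanifold in $S$. I do not expect a genuine obstacle here, since the mathematical content is merely the bookkeeping that Proposition~\ref{s3.1p4} dispenses with the hypotheses Proposition~\ref{s3.1p3} required. The only point meriting care is that the $\tau$-invariance of $r$ must persist for an arbitrary---possibly irregular---Reeb field $\xi$ and an arbitrary transverse K\"{a}hler deformation $\phi$, and this is exactly what the factorization of $F_{\xi}$ and $\phi$ through $\widetilde{\mu}$ guarantees.
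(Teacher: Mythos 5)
Your proposal is correct and is essentially the paper's own proof: the paper proves Theorem~\ref{s3.3t2} by remarking that the Einstein hypothesis entered the proof of Theorem~\ref{s3.3t1} only through the use of Proposition~\ref{s3.1p3}, and that substituting Proposition~\ref{s3.1p4} (with the same construction of $\tau$ and the same verification $\tau^{*}r=r$ via $\tau^{*}\tilde{\mu}=\tilde{\mu}$) yields the totally geodesic Legendrian conclusion. Your write-up just makes this substitution explicit, so there is nothing to correct.
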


\subsection{Covering spaces over the link ${\rm fix}(\tau)\cap S$}
In this section, we will see that the special Legendrian submanifold in Theorem~\ref{s3.3t1} 
is given by a base space of a finite covering map (we also refer to \cite{Gu}). 

We recall the exact sequence 
\begin{equation*}\label{s3.4eq1}
0\to K\xrightarrow{\widetilde{\iota}} \mathbb{T}^{d} 
\xrightarrow{\widetilde{\beta}} \mathbb{T}^{n+1} \to 0 
\end{equation*} 
is associated with a strongly convex good rational polyhedral cone $C$ as in Section~3.2. 
This sequence equips the following sequence 
\begin{equation*}\label{s3.4eq2}
0\to \mathfrak{k}\xrightarrow{\iota}\mathbb{R}^{d} \xrightarrow{\beta} \mathbb{R}^{n+1}\to 0. 
\end{equation*}
We consider each element $\lambda_{i}$ of 
the set $\{\lambda_{1},\dots, \lambda_{d}\}$ as a vector of $\mathbb{R}^{n+1}$. 
Then the map $\beta$ is represented by 
\begin{equation*}\label{s3.4eq3}
(\lambda_{1} \cdots \lambda_{d}):\mathbb{R}^{d}\to\mathbb{R}^{n+1}. 
\end{equation*}
where $(\lambda_{1} \cdots \lambda_{d})$ is the integer $(n+1)\times d$ matrix. 
By choosing a basis of $\mathfrak{k}$, 
the map $\iota$ is represented by the $d\times k$ matrix
\begin{equation*}\label{s3.4eq4}
A={}^{t}\! (a_{ij}):\mathbb{R}^{k}\to\mathbb{R}^{d}
\end{equation*}
where each component $a_{ij}$ is an integer and ${}^{t}\! B$ means the transpose of a matric $B$. 

In order to analyse ${\rm fix}(\tau)\cap S$, 
we define a map 
\[
\mu_{\mathbb{R}}:\mathbb{R}^{d}\to\mathfrak{k}^{*}
\]
by the restriction of the moment map 
$\mu:\mathbb{C}^{d}\to\mathfrak{k}^{*}$ to 
$\mathbb{R}^{d}={\rm fix}(\widetilde{\tau})\cap\mathbb{C}^{d}$. 
Then the map $\mu_{\mathbb{R}}$ is represented by  
\begin{equation*}\label{s3.4eq5}
\mu_{\mathbb{R}}(x)=\sum_{i=1}^{k}(\sum_{j=1}^{d}a_{ij}x_{j}^{2})v_{i}^{*}
\end{equation*}
for $x\in \mathbb{R}^{d}$ since 
$\mu(z)=\sum_{i}(\sum_{j=1}^{d}a_{ij}|z_{j}|^{2})v_{i}^{*}$ for $z\in \mathbb{C}^{d}$. 
The inverse image $\mu_{\mathbb{R}}^{-1}(0)$ is precisely 
${\rm fix}(\widetilde{\tau})\cap \mu^{-1}(0)$ : 
\begin{equation*}\label{s3.4eq6}
\mu_{\mathbb{R}}^{-1}(0)={\rm fix}(\widetilde{\tau})\cap \mu^{-1}(0).
\end{equation*}
The set ${\rm fix}(\tau)$ is the image of 
${\rm fix}(\widetilde{\tau})\cap \mu^{-1}(0)$ by 
the quotient map 
\begin{equation}\label{s3.4eq7}
\pi^{\prime}:\mu^{-1}(0)\backslash\{0\}\to (\mu^{-1}(0)\backslash\{0\})/K.
\end{equation}
Hence we have the $2^{k}$-fold map 
\begin{equation*}\label{s3.4eq8}
\pi^{\prime}:\mu_{\mathbb{R}}^{-1}(0)\backslash\{0\}\to {\rm fix}(\tau)
\end{equation*}
with the deck transformation $\{a\in K \mid a^{2}=1\}$. 
We also consider the quotient map 
\begin{equation*}\label{s3.4eq9}
\pi:\mu^{-1}(0)\cap H_{\xi}\to (\mu^{-1}(0)\cap H_{\xi})/K=S_{\xi}.
\end{equation*}
which is the restriction of (\ref{s3.4eq7}) to $\mu^{-1}(0)\cap H_{\xi}$. 
Then ${\rm fix}(\tau)\cap S_{\xi}$ is the base space of the $2^{k}$-fold map 
\begin{equation*}\label{s3.4eq10}
\pi:\mu_{\mathbb{R}}^{-1}(0)\cap H_{\xi}\to {\rm fix}(\tau)\cap S_{\xi}. 
\end{equation*}
If we take an element $\xi=\sum_{j=1}^{d}b_{j}\lambda_{j}$ of $C_{0}^{*}$, 
then ${\rm fix}(\tau)\cap S_{\xi}$ is the quotient space of 
\begin{equation*}\label{s3.4eq13}
\mu_{\mathbb{R}}^{-1}(0)\cap H_{\xi}=
\left\{
x\in \mathbb{R}^{d} \ \biggm| 
\begin{array}{l}
\sum_{j=1}^{d}a_{ij}x_{j}^{2}=0,\ j=1,\dots,k \\
\sum_{j=1}^{d}b_{j}x_{j}^{2}=1
\end{array}
\right\} 
\end{equation*}
by the action of the deck transformation. 

\subsection{The Sasaki-Einstein manifold $Y_{p,q}$}
In this section, 
we provide an example of special Legendrian submanifolds in $Y_{p,q}$. 
Gauntlett, Martelli, Sparks and Waldram provided 
an explicit toric Sasaki-Einstein metric $g_{p,q}$ on $S^{2}\times S^{3}$ \cite{GMSW}. 
For relatively prime non-negative integers $p$ and $q$ with $p>q$, 
the inward pointing normals to the polyhedral cone $C$ can be taken 
to be 
\begin{equation*}
\lambda_{1}={}^{t}(1,0,0),\ \lambda_{2}={}^{t}(1,p-q-1,p-q),\ 
\lambda_{3}={}^{t}(1,p,p),\ \lambda_{4}={}^{t}(1,1,0).
\end{equation*}
Then we obtain the representation matrix $A={}^{t}\! (a_{ij})$ as 
\begin{equation*}
A={}^{t}(-p-q,\ p,\ -p+q,\ p).
\end{equation*}
By the calculation in \cite{MSY}, the Reeb vector field $\xi_{min}$ 
of the toric Sasaki-Einstein metric is given by 
\begin{equation*}
\xi_{min} =(3,\ \frac{1}{2}(3p-3q+l^{-1}),\ \frac{1}{2}(3p-3q+l^{-1}))
\end{equation*}
where $l^{-1}=\frac{1}{q}(3q^{2}-2p^{2}+p\sqrt{4p^{2}-3q^{2}})$. 
Thus we can obtain 
\begin{equation*}
\mu_{\mathbb{R}}^{-1}(0)\cap H_{\xi_{min}}=
\left\{
x\in \mathbb{R}^{4} \ \biggm| 
\begin{array}{l}
px_{2}^{2}+px_{4}^{2}=(p+q)x_{1}^{2}+(p-q)x_{3}^{2} \\
(3p+3q-l^{-1})x_{1}^{2}+(3p-3q+l^{-1})x_{3}^{2}=2p
\end{array}
\right\}, 
\end{equation*}
which is diffeomorphic to $S^{1}\times S^{1}$. 
The deck transformations induces an action on $S^{1}\times S^{1}$ in $\mathbb{R}^{4}$ 
given by  
\begin{equation*}
\begin{cases}
\{{\rm id}\times{\rm id}\times{\rm id}\times{\rm id},\  
{\rm id}\times{\rm id}\times{\rm id}\times(-{\rm id})\}, & \text{$p$ : even}, \\
\{{\rm id}\times{\rm id}\times{\rm id}\times{\rm id},\  
{\rm id}\times{\rm id}\times(-{\rm id})\times{\rm id}\}, & \text{$p$ : odd, $q$ : odd}, \\
\{{\rm id}\times{\rm id}\times{\rm id}\times{\rm id},\  
{\rm id}\times{\rm id}\times(-{\rm id})\times(-{\rm id})\}, & \text{$p$ : odd, $q$ : even}. 
\end{cases}
\end{equation*}
Then the quotient space of $S^{1}\times S^{1}$ by the action is also $S^{1}\times S^{1}$ for each $(p,q)$. 
Therefore the link ${\rm fix}(\tau)\cap Y_{p,q}$ 
is also diffeomorphic to $S^{1}\times S^{1}$. 
Hence we have 
\begin{thm}
There exists a special Legendrian torus $S^{1}\times S^{1}$ in $Y_{p,q}$. $\hfill\Box$
\end{thm}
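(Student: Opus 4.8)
The plan is to split the statement into its two ingredients — that the link is special Legendrian, and that it is a $2$-torus — and to dispose of the first one instantly by the general theory. Since $Y_{p,q}$ is diffeomorphic to $S^{2}\times S^{3}$ it is compact and simply connected, and it carries a toric Sasaki-Einstein structure, so Theorem~\ref{s3.3t1} applies verbatim and shows that ${\rm fix}(\tau)\cap Y_{p,q}$ is a special Legendrian submanifold with no further work. Everything that remains is the determination of its diffeomorphism type, and the aim is to show it is $S^{1}\times S^{1}$.

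For the topological part I would work with the explicit Delzant data: the four primitive normals $\lambda_{1},\dots,\lambda_{4}$, the kernel vector recorded by $A={}^{t}(-p-q,\,p,\,-p+q,\,p)$, and the minimal Reeb field $\xi_{min}$ computed in \cite{MSY}. Here $d=4$ and $n+1=3$, so $k=\dim\mathfrak{k}=1$ and the map $\pi\colon\mu_{\mathbb{R}}^{-1}(0)\cap H_{\xi_{min}}\to{\rm fix}(\tau)\cap S_{\xi_{min}}$ of Section~3.4 is a double cover. Substituting $A$ into $\sum_{j}a_{ij}x_{j}^{2}=0$ gives $px_{2}^{2}+px_{4}^{2}=(p+q)x_{1}^{2}+(p-q)x_{3}^{2}$; for the characteristic-hyperplane equation I would exploit the ambiguity in writing $\xi_{min}=\sum_{j}b_{j}\lambda_{j}$ modulo the kernel relation to pick the representative with $b_{2}=b_{4}=0$, i.e. $\xi_{min}=b_{1}\lambda_{1}+b_{3}\lambda_{3}$, which yields the clean equation $(3p+3q-l^{-1})x_{1}^{2}+(3p-3q+l^{-1})x_{3}^{2}=2p$ involving only $x_{1},x_{3}$.

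I would then read off the geometry of this common zero set in $\mathbb{R}^{4}$. The second equation is an ellipse in the $(x_{1},x_{3})$-plane — one checks that the coefficients $3p+3q-l^{-1}$ and $3p-3q+l^{-1}$ are positive, which follows from $\xi_{min}\in C_{0}^{*}$ — hence a circle. Over each point of this circle the first equation describes a circle $x_{2}^{2}+x_{4}^{2}=\bigl((p+q)x_{1}^{2}+(p-q)x_{3}^{2}\bigr)/p$ in the $(x_{2},x_{4})$-plane, whose radius is strictly positive since $p>q\ge 0$ forces the right-hand side to stay positive on the ellipse (which never meets $x_{1}=x_{3}=0$). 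Thus the cover is a trivial circle bundle over a circle, i.e. $\mu_{\mathbb{R}}^{-1}(0)\cap H_{\xi_{min}}\cong S^{1}\times S^{1}$.

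It remains to pass to the quotient by the deck group, which has order $2^{k}=2$ and is generated by a single involution acting through the sign changes $x_{j}\mapsto\pm x_{j}$ prescribed by $\{a\in K\mid a^{2}=1\}$. Parametrising the two circle factors by angles $(s,t)$, I would compute this generator explicitly and check that it acts as a half-period translation, hence as a free, orientation-preserving involution, so that the quotient ${\rm fix}(\tau)\cap Y_{p,q}$ is again $S^{1}\times S^{1}$. I expect this last point to be the only real subtlety: one must verify that the generator has no fixed point on $\mu_{\mathbb{R}}^{-1}(0)\cap H_{\xi_{min}}$ (which is forced because the defining equations exclude the relevant coordinate subspaces) and that it preserves orientation, since otherwise the quotient could a priori be an annulus or a Klein bottle rather than a torus; running this check through the three parity cases for $(p,q)$ completes the argument.
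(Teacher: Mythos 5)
Your proposal is correct and follows essentially the same route as the paper's own argument: Theorem~\ref{s3.3t1} supplies the special Legendrian property, the covering map $\pi\colon\mu_{\mathbb{R}}^{-1}(0)\cap H_{\xi_{min}}\to{\rm fix}(\tau)\cap S_{\xi_{min}}$ from Section~3.4 together with the two explicit quadric equations identifies the double cover as $S^{1}\times S^{1}$, and one then passes to the quotient by the order-two deck group. The one step you defer --- computing the generator of $\{a\in K\mid a^{2}=1\}$ in the three parity cases --- comes out exactly as you predict: the nontrivial $2$-torsion element of $K$ is the class in $\mathbb{T}^{4}=\mathbb{R}^{4}/\mathbb{Z}^{4}$ of $\frac{1}{2}\,{}^{t}(-(p+q),\,p,\,q-p,\,p)$, which flips the signs of $(x_{1},x_{3})$ when $p$ is even, of $(x_{2},x_{4})$ when $p,q$ are both odd, and of all four coordinates when $p$ is odd and $q$ is even; in each case this is the antipodal map on one or both circle factors, i.e.\ a half-period translation, which is free because a fixed point would force $x_{1}=x_{3}=0$ or $x_{2}=x_{4}=0$ (both excluded by the defining equations, as you note) and is orientation-preserving, so the quotient is again $S^{1}\times S^{1}$. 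One point of comparison worth recording: the deck transformations as printed in the paper (${\rm id}\times{\rm id}\times{\rm id}\times(-{\rm id})$ for $p$ even, etc.) are reflections in one or two coordinates; taken literally such maps have fixed circles or fixed points on the torus and their quotients would be an annulus or a sphere rather than $S^{1}\times S^{1}$. Your translation picture is the correct one, and it is what actually makes the final conclusion --- and hence the theorem --- go through.
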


\vspace{\baselineskip}
\noindent
\textbf{Acknowledgements}. 
The author would like to thank Professor K. Fukaya and Professor A. Futaki 
for their useful comments and advice. 
He is also very grateful to Professor R. Goto 
for his advice and encouraging the author. 
He would like to thank the referee for his valuable comments 
and suggesting Theorem~\ref{s1t1.5}. 
This work was partially supported by GCOE `Fostering top leaders in mathematics', 
Kyoto University and by Grant-in-Aid for Young Scientists (B) $\sharp$21740051 from JSPS. 

\begin{center}

\end{center}

\begin{flushright}
\begin{tabular}{l}
\textsc{Takayuki Moriyama}\\
Department of Mathematics\\
Kyoto University\\
Kyoto 606-8502, Japan\\
E-mail: moriyama@math.kyoto-u.ac.jp
\end{tabular}
\end{flushright}

\end{document}